\newtheorem{thm}{Theorem}[section]
\newtheorem{cor}[thm]{Corollary}
\newtheorem{assumption}[thm]{Assumption}
\newtheorem{lem}[thm]{Lemma}
\newtheorem{prop}[thm]{Proposition}
\theoremstyle{definition}
\newtheorem{defn}[thm]{Definition}
\theoremstyle{remark}
\newtheorem{rem}[thm]{Remark}
\numberwithin{equation}{section}
\newcommand{\set}[1]{\left\{#1\right\}}
\newcommand{\Real}{\mathbb R}
\newcommand{\func}[1]{\ensuremath{\mathop{\mathrm{#1}}} }
\newcommand{\dist}[0]{\mathrm{dist}}
\newcommand{\spt}[0]{\func{spt}}
\newcommand{\sing}[0]{\func{sing}}
\newcommand{\reg}[0]{\func{reg}}
\newcommand{\vV}[0]{\mathbf{v}}
\newcommand{\LL}[0]{\mathcal{L}}
\newcommand{\R}{\mathbb{R}}
\newcommand{\wt}{\widetilde}
\newtheoremstyle{TheoremNum}
        {\topsep}{\topsep}              %%% space between body and thm
        {\itshape}                      %%% Thm body font
        {}                              %%% Indent amount (empty = no indent)
        {\bfseries}                     %%% Thm head font
        {.}                             %%% Punctuation after thm head
        { }                             %%% Space after thm head
        {\thmname{#1}\thmnote{ \bfseries #3}}%%% Thm head spec
    \theoremstyle{TheoremNum}
    \newtheorem{theoremn}{Theorem}
\begin{document}
\title[Topological Type of Limit Laminations]{Topological type of Limit Laminations of Embedded Minimal Disks}
\author{Jacob Bernstein \and Giuseppe Tinaglia}
\thanks{The first author was partially supported by the EPSRC Programme Grant entitled ``Singularities of Geometric Partial Differential Equations'' grant number EP/K00865X/1. The second author was partially supported by   EPSRC grant no. EP/L003163/1}

\begin{abstract}
 We consider two natural classes of minimal laminations in three-manifolds.  
Both classes may be thought of as limits -- in different senses -- of embedded 
minimal disks.  In both cases, we prove that, under a natural geometric 
assumption on the three-manifold, the leaves of these laminations are 
topologically either disks, annuli or M\"obius bands.  This answers a question 
posed by Hoffman and White.
\end{abstract}
\maketitle
\section{Introduction}
Let $\Omega$ be a a fixed Riemannian three-manifold.  Suppose that 
$\Omega_i\subset \Omega$ is an increasing sequence of open sets with 
$\Omega=\bigcup \Omega_i$ and that  $\Sigma_i$ are minimal surfaces properly 
embedded in $\Omega_i$. We say that the curvatures of the $\Sigma_i$ blow up at 
a point $p\in\Omega$ if there exists a sequence of points $p_i\in \Sigma_i$ 
converging to $p$  such that  $|A_{\Sigma_i}|(p_i)$ becomes arbitrarily large; 
where $|A_{\Sigma}|$ denotes the norm of the second fundamental form of 
$\Sigma$. We call such $p$ a blow-up point and observe that the set, $K$, of 
blow-up points is closed in $\Omega$.  The points of $K$ are precisely the 
obstruction to the sequence smoothly subconverging. Indeed, up to passing to a 
subsequence,  the $\Sigma_i\backslash  K$ converge  on compact subsets of 
$\Omega\backslash K$ to a smooth proper minimal lamination $\mathcal L$ in 
$\Omega\backslash K$ -- see~\cite[Appendix B]{cm24}. Recall, a 
lamination is a foliation that need not fill space.  We call the quadruple $(\Omega, K, \LL, \mathcal{S})$ a \emph{minimal surface sequence}.  A natural question is:

\begin{quote}
In a minimal surface sequence, what singular sets, $K$, and limit laminations, $\mathcal L$, can arise?
\end{quote}

Work of Anderson~\cite{an1} and White~\cite{wh5}, answers this question when the 
total extrinsic curvatures of the surfaces $\Sigma_i$ -- i.e., 
$\int_{\Sigma_i}|A_{\Sigma_i}|^2$ -- are uniformly bounded. In this case, $K$ is 
finite and $\mathcal L$ extends smoothly across $K$. 
In general, without such a strong assumption on the geometry of the surfaces in the sequence one does not expect such a complete answer.   

Remarkably,   when the $\Sigma_i$ are assumed only to be disks, an elegant story also emerges -- we call such minimal surface sequences, \emph{minimal disk sequences}.  In a series of papers~\cite{cm21, cm22, cm24, cm23}, Colding and Minicozzi extensively studied these sequences and proved deep structural results about both their singular sets, $K$, and limit laminations, $\LL$.  Specifically, they showed that $K$ must be contained in a Lipschitz curve and that for any point $p\in K$ there exists a leaf of $\mathcal L$  that extends smoothly across $p$.  When $\Omega=\Real^3$, Colding and Minicozzi further showed that either $K=\emptyset$ or $\mathcal{L}$ is a foliation of $\Real^3\backslash K$ by parallel planes and that $K$ consists of a connected Lipschitz curve which meets the leaves of $\LL$ transversely.  Using this result, Meeks and Rosenberg showed in~\cite{mr8} that the helicoid is the unique non-flat properly embedded minimal disk in $\Real^3$ -- see also~\cite{Bernstein2011a}. This uniqueness 
was then used by Meeks~\cite{me30} to prove that if $\Omega=\Real^3$ and $K\neq \emptyset$, then $K$ is a line orthogonal to the leaves of $\LL$. This is precisely the limit of a sequence of rescalings of a helicoid.

An example constructed by Colding and Minicozzi in~\cite{cm28}, illustrates how 
for general regions such a simple and complete description of the limit 
lamination does not hold.  Specifically, they constructed a sequence of properly 
embedded minimal disks in the unit ball $B_1$ of $\Real^3$ which has $K=\set{0}$ 
and whose limit lamination consists of three leaves -- two are non-proper and 
spiralling into the third which is the punctured unit disk in the $x_3$-plane.  
Inspired by this, a plethora of examples have now been constructed which show 
that the singular set $K$ can consist of any closed subset of a line, 
see~\cite{Dean2006, Khan, Kleene, HoffmanWhite1}.  Likewise,  Meeks and Weber 
\cite{mwe1} have given examples where $K$ is curved. Strikingly, Hoffman and 
White~\cite{hofw2} have also constructed  minimal disk sequences in which $K=\emptyset$ and 
the limit lamination $\LL$ has a leaf which is a proper annulus in $\Omega$.  In 
all examples the 
leaves are either topologically disks or annuli. This motivated Hoffman and 
White to ask in~\cite{hofw2}:

\begin{quote}
Can a surface of positive genus occur as a leaf of the lamination $\LL$  of a 
minimal disk sequence? A planar domain with more than two holes?
\end{quote}

In this paper we show that, under natural geometric assumptions on $\Omega$, 
the answer to both questions is no. 
That is, the leaves of $\mathcal L$ must be either disks or annuli.
\begin{thm} \label{BeautifulThm}
 Let $\Omega$ be the interior of a compact oriented three-manifold $N=\overline{\Omega}$ with mean-convex boundary. If $\Omega$ contains no closed minimal surfaces and 
$(\Omega, K, \mathcal{L}, \mathcal{S})$ is a minimal disk sequence, then the 
leaves of $\LL$ are either disks or annuli.  Furthermore, if $L$ is a leaf of 
$\mathcal L$ with the property that  $\overline{L}$ -- the closure in $\Omega$  of 
$L$ -- is a properly embedded minimal surface, then $\overline{L}$ is either a disk 
or it is an annulus which is disjoint from $K$.
\end{thm}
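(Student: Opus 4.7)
The plan is to first prove the ``furthermore'' assertion — that any leaf $L$ whose closure $\overline{L}$ is a properly embedded minimal surface must be a disk or annulus disjoint from $K$ — and then to reduce the general statement to it via a limit-leaf argument. Write $\Sigma = \overline{L}$ for the proper-closure case.

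\emph{Disjointness from $K$.} The first step is to show $\Sigma \cap K = \emptyset$. If some $p \in \Sigma \cap K$ existed, then by the Colding--Minicozzi regularity result quoted in the introduction, a leaf of $\LL$ would extend smoothly across $p$, and uniqueness of leaves containing $p$ in a properly embedded minimal lamination forces this leaf to coincide with $\Sigma$. But the local model of the $\Sigma_i$ at a blow-up point is a sequence of rescaled helicoids whose axis is the smoothly extending leaf, so the $\Sigma_i$ spiral infinitely around $\Sigma$ near $p$; passing to the limit produces additional non-proper leaves accumulating on $\Sigma$, which contradicts $\Sigma = \overline L$. In particular, the convergence $\Sigma_i \to \Sigma$ is smooth and graphical in a tubular neighborhood of $\Sigma$ with some multiplicity $m_i \in \N \cup \{\infty\}$.

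\emph{Topological bound on $\Sigma$.} The mean-convexity of $\partial \Omega$ together with the absence of closed minimal surfaces in $\Omega$ forces $\Sigma$ to be non-compact and of finite topology (by barrier and strong maximum principle arguments on ends). The heart of the argument is then a cycle-lifting: given an embedded loop $\gamma \subset \Sigma$, for $i$ large one extracts a graphical lift $\gamma_i \subset \Sigma_i$ Hausdorff-close to $\gamma$; since $\Sigma_i$ is a disk, $\gamma_i$ bounds a unique subdisk $D_i \subset \Sigma_i$. Taking a subsequential limit of $D_i$ inside $\LL$ produces a bounded subdomain of $\Sigma$ (or its lamination closure) whose boundary is $\gamma$. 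If $\gamma$ were nonseparating in $\Sigma$ such a bounding region could not exist, so $\Sigma$ must have genus zero. A pants-type variant — lifting three pairwise disjoint loops bounding a thrice-punctured sphere in $\Sigma$ to a subregion of $\Sigma_i$ which is then forced to be a planar domain in a disk — shows $\Sigma$ has at most two ends. Together these give that $\Sigma$ is a disk or an annulus.

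\emph{Reduction of the general statement and main obstacle.} For a leaf $L$ whose closure is not a proper minimal surface, $\overline L \setminus L$ lies in the union of $K$ and other leaves on which $L$ accumulates. Standard limit-leaf stability theory, combined with the no-closed-minimal-surfaces hypothesis, shows each such limit leaf is itself properly embedded in $\Omega$ and hence falls under the case already treated. A tubular normal-graph description of $L$ over the stable limit leaves then transfers the disk/annulus conclusion back to $L$. The principal technical obstacle is controlling the disks $D_i$ in the lifting argument when the multiplicity $m_i$ is unbounded: in this spiraling regime $D_i$ may extend far outside the tubular neighborhood of $\Sigma$ and wander into regions where $\LL$ has singular behavior. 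What is needed is a chord-arc estimate in the spirit of the Colding--Minicozzi one-sided curvature estimate showing that $D_i$ stays captured in a controlled neighborhood of a subdomain of $\Sigma$ bounded by $\gamma$; this is where the bulk of the technical effort will lie.
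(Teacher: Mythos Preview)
Your overall strategy is inverted relative to the paper, and this inversion introduces real gaps.

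\textbf{The disjointness claim is incorrect.} You argue that if $p \in \Sigma \cap K$ then the spiraling of the $\Sigma_i$ near $p$ produces ``additional non-proper leaves accumulating on $\Sigma$, which contradicts $\Sigma = \overline{L}$.'' But it does not: $\overline{L}$ is the closure of the single leaf $L$, and the hypothesis says nothing about whether \emph{other} leaves of $\LL$ accumulate on $\Sigma$. In fact $\Sigma$ \emph{can} meet $K$ --- the theorem allows $\overline{L}$ to be a disk meeting $K$ in a point (so that $L$ itself is an annulus). The paper handles this correctly by first proving that $L$ is a disk or annulus, then invoking Colding--Minicozzi (Proposition~\ref{CM}) to see that $\overline{L}\cap K$ is discrete, and finally observing that an annulus minus a nonempty discrete set is neither a disk nor an annulus.

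\textbf{The reduction from general leaves to proper-closure leaves is not viable.} You propose to classify a general leaf $L$ by studying its limit leaves and then ``transferring'' the topology via a normal-graph description. But a non-proper leaf can accumulate on limit leaves whose closures are themselves non-proper, so there is no clean base case; and there is no mechanism by which the topology of the limit leaves constrains the topology of $L$ (a spiraling annulus can accumulate on a disk, for instance). The paper avoids this entirely: it introduces the \emph{simple lift property} (Section~4), shows every leaf of $\LL$ has it regardless of whether its closure is proper, and then proves directly that any surface with this property in such an $\Omega$ has genus zero and at most two ends (Propositions~\ref{GenusZeroProp} and~\ref{NoPantsProp}).

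Your lifting idea --- pulling loops in $L$ back to curves in $\Sigma_i$ and bounding disks there --- is exactly the right intuition, and is what the paper formalizes. The obstacle you flag (controlling the $D_i$ when multiplicity is unbounded) is resolved not by chord-arc estimates but by an isoperimetric inequality of White (Lemma~\ref{IsoperimLem}) together with the Schoen--Simon curvature estimate, which give uniform area and curvature bounds on the $D_i$ and allow passage to a limit. The correct order is: prove the topology of \emph{every} leaf first, then deduce the ``furthermore'' as a short corollary.
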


The theorem is proved by realizing the disks in the sequence $\mathcal{S}$ as 
effective universal covers of the leaves of $\LL$.  Geometric considerations -- 
specifically the fact that the disks are embedded -- strongly restrict these 
covers and this  restricts the topology of the leaves as claimed.  Our proof 
uses relatively elementary topological and geometric properties of embedded 
minimal disks -- in particular, we do not directly use the deep results of 
\cite{cm21, cm22, cm23, cm24}. 

Simple examples show that it is possible for the leaves of $\LL$ to be planar 
domains with more than two holes when  the boundary is not assumed to be
mean-convex.  Nevertheless, the methods of the present paper continue to 
show that the leaves are genus 
zero.  However,  as treating this more general case introduces several 
technical 
points,  we do not persue it. 

We further remark that it is unclear whether the condition that $\Omega$ admits 
no closed minimal surface is necessary.  To better understand this question we 
consider also the topological type of leaves of a different -- though related -- 
class of minimal laminations. 
Specifically, we say that a quadruple $(\Omega, K, \LL, \LL_0)$ is  a 
\emph{minimal disk closure} if  $\LL_0$ is a minimal lamination in $\Omega$ all 
of whose leaves are disks, $K$ is a relatively closed subset of $\Omega$ and 
$\LL=\overline{\LL}_0$, the closure of $\LL_0$ in $\Omega\backslash K$, is a proper 
minimal lamination in $\Omega\backslash K$.
\begin{thm} \label{BeautifulThm2}
Let $\Omega$ be the interior of a compact 
oriented three-manifold with boundary 
$N=\overline{\Omega}$ which has mean-convex boundary. If $\Omega$ contains no closed minimal surfaces and  
$(\Omega, K, \mathcal{L}, \mathcal{L}_0)$ is a minimal disk closure, then the 
leaves of $\LL$ are either topologically open subsets $\mathbb{S}^2$ or open subsets $\mathbb{RP}^2$.  
\end{thm}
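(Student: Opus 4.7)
The plan is to follow the approach used for Theorem~\ref{BeautifulThm}, namely to view the disk leaves of $\mathcal{L}_0$ as effective universal covers of the remaining leaves of $\mathcal{L}$. A leaf in $\mathcal{L}_0$ is a disk by hypothesis, so the substance of the proof is the case of a limit leaf $L \in \mathcal{L}\setminus \mathcal{L}_0$. For such $L$ there is a sequence of disks $D_n \in \mathcal{L}_0$ accumulating on $L$, and the aim is to show that the embeddedness of the $D_n$, combined with the hypotheses on $\Omega$, forces $\pi_1(L)$ to be cyclic, so $L$ is a disk, annulus, or M\"obius band.

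First I would set up the local geometry: fix a tubular neighborhood $U$ of $L$ in $\Omega \setminus K$ -- lifted to the orientation double cover of $L$ if $L$ is one-sided -- together with the associated normal projection $\pi : U \to L$. For $n$ sufficiently large, each component of $D_n \cap U$ lying over a precompact subdomain of $L$ is a normal graph under $\pi$, so $D_n \cap U$ decomposes into sheets. Since $D_n$ is connected, distinct sheets must be joined by arcs of $D_n$ that leave and re-enter $U$; since $D_n$ is simply connected, lifting any loop in $L$ through the sheets must close up inside $D_n$. This data produces a well-defined monodromy action of $\pi_1(L)$ on the set of sheets, and the goal becomes to show that this action has cyclic image.

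The main technical step, and likely the hardest, is to use the hypotheses on $\Omega$ to rule out bridging configurations of $D_n$ whose monodromy is non-cyclic. The underlying picture is that an embedded disk can ``wind around'' a loop in the ambient manifold in a helicoid-like way, but can only do so consistently around a single homotopy class; if $\pi_1(L)$ had rank $\geq 2$, the disks $D_n$ would be forced to encode two independent winding directions, and doing so with an embedded disk forces one of three outcomes, each excluded by the hypotheses: a self-intersection (excluded by embeddedness), the closing up of an interior bridge into a closed minimal surface (excluded by the absence of closed minimal surfaces in $\Omega$), or a bridging arc that escapes to and accumulates on $\partial\Omega$ (excluded by mean-convexity). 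The chief new wrinkle relative to Theorem~\ref{BeautifulThm} is the one-sided case: a single leaf of $\mathcal{L}_0$ can be close to both locally distinguishable sides of a one-sided limit leaf, which is impossible for the disjoint disks of a minimal disk sequence; consequently, the sheet and bridge analysis must be performed on the orientation double cover of $L$, and it is in this extra setting that M\"obius bands are allowed in the conclusion of Theorem~\ref{BeautifulThm2}.
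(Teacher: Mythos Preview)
Your high-level intuition --- disk leaves of $\mathcal{L}_0$ act as effective universal covers of a limit leaf $L$ --- matches the paper, but the setup has errors and the key mechanism is missing. Two issues with the local picture: over a precompact piece of $L$ the components of a nearby disk leaf are simply $\delta$-graphs (this is exactly what Proposition~\ref{simpleliftprop} establishes), so there are no ``bridging arcs that leave and re-enter $U$'' and the sheet-and-bridge description does not apply here; and the sentence ``lifting any loop in $L$ through the sheets must close up inside $D_n$'' is backwards --- the essential phenomenon is that lifts of homotopically nontrivial loops typically do \emph{not} close up (the paper's ``open lift property''), and that is what encodes the covering behavior.

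More seriously, the ``main technical step'' as written is a heuristic trichotomy, not an argument, and it misidentifies the roles of the hypotheses. The paper's mechanism is concrete and has two pieces. First (Proposition~\ref{CommutatorClosedLiftProp}): for two open-lift loops $\alpha,\beta$ meeting at a single point in a two-sided neighborhood, the signed sheet-jump is integer-valued and additive, so the commutator $\alpha\beta\alpha^{-1}\beta^{-1}$ (or one of $\alpha\beta$, $\beta\alpha^{-1}$) has a \emph{closed embedded} lift. Second (Lemma~\ref{Gluelem}): any curve with the embedded closed lift property bounds a finite-area minimal disk inside $L$ --- one takes the subdisks of the $\Delta_n$ bounded by the closed lifts, uses mean-convexity and the absence of closed minimal surfaces through White's isoperimetric inequality (Lemma~\ref{IsoperimLem}) to get uniform area bounds, and then Schoen--Simon curvature estimates to pass to a limit. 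Feeding a genus pair into these two tools yields a compact finite-area minimal surface containing $L$, contradicting the hypothesis (Proposition~\ref{GenusZeroProp}); a separate argument with three separating curves rules out pairs of pants (Proposition~\ref{NoPantsProp}). Your monodromy framework could in principle be made rigorous, but it would need precisely these ingredients --- the commutator/sheet-count lemma and the filling lemma with its area and curvature estimates --- none of which the proposal supplies.
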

Notice that the conclusions of Theorem \ref{BeautifulThm2} are weaker than those of Theorem \ref{BeautifulThm} in two ways.  First, we cannot rule out that the presence of many punctures. Secondly, we can no longer rule out the existence of one-sided leaves of $\LL$.  In Appendix \ref{ExampleApp}, we give an example to show this weakening is unavoidable -- that is, we construct a minimal disk closures whose leaves have multiple punctures and a minimal disk closure $(\Omega, \emptyset, \LL, \LL_0)$ where $\Omega$ is a solid torus and $\LL$ contains a M\"obius band as a leaf.  We also give an example in Appendix \ref{ExampleApp} that shows that Theorem \ref{BeautifulThm2} is  sharp -- i.e., we construct a minimal disk closure $(\Omega, \emptyset, \LL, \LL_0)$ for which one leaf of $\LL$ is a torus.  

\section{Notation}
Fix a smooth oriented Riemannian three-manifold $(\Omega,g)$.  We denote by 
$\dist^\Omega$ the distance function on $\Omega$ and by $\exp^\Omega$ the 
exponential map. Hence, 
\[
 \exp_p^\Omega:B_r\to \mathcal{B}_r(p)
\]
where $B_r$ is the usual euclidean ball in $\Real^3$ centered at the origin and $\mathcal{B}_r(p)$ is the geodesic ball in $\Omega$.
A subset  $\Sigma\subset \Omega$ is an \emph{embedded smooth surface} if for 
each point $p\in \Sigma$ there is a radius $r_p>0$ and diffeomorphism 
$\phi_p:\mathcal{B}_{r_p}(p)\to B_1$ so that $\mathbb{D}_1=\phi_p(\Sigma\cap 
\mathcal{B}_{r_p}(p))$.  Here, $\mathbb{D}_1=B_1\cap \set{x_3=0}\subset B_1$ is 
the unit disk. Such $\Sigma$ is said to be \emph{proper} in $\Omega$ if it is a 
closed subset of $\Omega$, that is, $\overline{\Sigma}=\Sigma$.

% A set $\Sigma \subset \Omega$ is an \emph{injectively immersed smooth surface} if there is a two-manifold $M$ and an injective smooth immersion
% \begin{equation*}
%  \xx:M \to \Sigma \subset N
% \end{equation*}
% parameterizing $\Sigma$. 
%  If $\Sigma\subset \Omega$, then we say that an injectively immersed smooth surface, $\Sigma$, is \emph{proper} in $\Omega$  if  the related $x$ is a proper immersion in $\Omega$.
%  Likewise, $\Sigma$ is an \emph{embedded smooth surface} if for each point $p\in \Sigma$ there is a radius $r>0$ and diffeomorphism $\phi_p:\mathcal{B}_r(p)\to B_1$ so that $\phi_p(\Sigma\cap B_r(p))=\set{x_3=0}\cap B_1$.  Note that if $\Sigma$ is embedded and proper in $\Omega$ then it is relatively closed in $\Omega$.

For an embedded smooth surface, $\Sigma$, we write
\[
\exp^\perp\colon N\Sigma   \to \Omega
\]
for the normal exponential map where here $N\Sigma$ is the normal bundle.
If $N\Sigma$ is trivial then we say that $\Sigma$ is \emph{two-sided}, otherwise we say that $\Sigma$ is \emph{one-sided}.  As $\Omega$ is oriented, $\Sigma$ is two-sided if and only if $\Sigma$ is orientable.
For a subset $U\subset N\Sigma$ set
\begin{equation*}
 \mathcal{N}_U(\Sigma):=\exp^\perp(U).
\end{equation*}
The set $\mathcal{N}_U(\Sigma)$ is \emph{regular} if there is an open set $V$ with $U\subset V$ so that $\exp^\perp:V\to \mathcal{N}_V(\Sigma)$ is a diffeomorphism.
If $\mathcal{N}_U(\Sigma)$ is regular, then the map $\Pi_\Sigma:\mathcal{N}_U(\Sigma)\to \Sigma$ given by nearest point projection is smooth and for any $(q,\vV)\in T\mathcal{N}_U(\Sigma)$, there is a natural splitting  $\vV=\vV^\perp+\vV^\top$, where $\vV^\perp$ is orthogonal to $\vV^\top$ and $\vV^\perp$ is tangent to the fibers of $\Pi_\Sigma$.  We say such $\vV$ is \emph{$\delta$-parallel to $\Sigma$} if 
\[
 |\vV^\perp|\leq \delta |\vV| \quad \mbox{and} \quad 
\frac{1}{1+\delta}|\vV^\top|\leq  |d(\Pi_\Sigma)_q(\vV)|\leq (1+\delta) |\vV^\top|.
\]
Given $\epsilon>0$ we set $U_\epsilon=\set{(p, \vV)\in N\Sigma: |\vV|<\epsilon}$ and define $\mathcal{N}_\epsilon(\Sigma)$, the \emph{$\epsilon$-neighborhood of $\Sigma$}, to be $\mathcal{N}_{U_\epsilon}(\Sigma)$. 
If $\Sigma$ is an embedded smooth surface and $\Sigma_0$ is a pre-compact 
subset, then there is an $\epsilon>0$ so that $\mathcal{N}_\epsilon(\Sigma_0)$ 
is regular. 

Given a fixed embedded surface $\Sigma$ and $\delta\geq 0$ 
we say that another embedded smooth surface $\Gamma$ is a \emph{smooth 
$\delta$-graph} over $\Sigma$ if there is an $\epsilon>0$ so that the following 
holds:
\begin{enumerate}
\item $\mathcal{N}_\epsilon(\Sigma)$ is a regular $\epsilon$-neighborhood of $\Sigma$;
\item Either $\Gamma$ is a proper subset of $\mathcal{N}_\epsilon(\Sigma)$ or 
$\Gamma$ is a proper subset of $\mathcal{N}_\epsilon(\Sigma)\backslash \Sigma$;
\item  Each $(q, \vV)\in T\Gamma$ is $\delta$-parallel to $\Sigma$.
\end{enumerate}
We say that a {smooth $\delta$-graph} over 
$\Sigma$, $\Gamma$, is a \emph{smooth $\delta$-cover} of 
$\Sigma$ if it is connected and 
$$\Pi_\Sigma(\Gamma)=\Sigma.$$

Let $\gamma\colon [0,1]\to \Sigma$ be a $C^1$ curve in $\Sigma$.  We will also 
denote the image of such $\gamma$ by $\gamma$.  We say that a curve $\widetilde{\gamma}\colon [0,1] \to \mathcal 
N_\delta(\gamma)$ is a \emph{$\delta$-lift} of $\gamma$ if $\mathcal 
N_\delta(\gamma)$ is regular, $\Pi_\Sigma\circ \widetilde{\gamma} = \gamma$ and 
for each $t\in[0,1]$, $(\widetilde{\gamma}(t), \widetilde{\gamma}'(t))$ is 
$\delta$-parallel to $\Sigma$.  This definition extends to piece-wise $C^1$ 
curves in an obvious manner.

\section{Minimal Laminations}\label{MinLamSec}
We recall some facts about laminations. 
\begin{defn}
A subset $\mathcal{L}\subset \Omega$ is a \emph{smooth lamination} if for each 
$p\in \mathcal{L}$, there is a radius $r_p>0$, maps $\phi_p, \psi_p: 
\mathcal{B}_{r_p}(p)\to B_1\subset \Real^3$ and a closed set $0\in T_p\subset 
(-1,1)$ so that:
 \begin{enumerate}
  \item \label{lam1}$\phi_p(p)=\psi_p(p)=0$;
  \item \label{lam2}$\phi_p$ is a smooth diffeomorphism and $\mathbb{D}_1\subset \phi_p(\mathcal{L}\cap \mathcal B_{r_p}(p))$;
  \item \label{lam3} $\psi_p$ is a Lipschitz diffeomorphism and $B_1\cap \set{x_3=t}_{t\in T_p}=\psi_p(\mathcal{L}\cap \mathcal B_{r_p}(p))$;
  \item \label{lam4}$\psi_p^{-1}(\mathbb{D}_1)=\phi_p^{-1} (\mathbb{D}_1)$.
 \end{enumerate}
 We refer to maps $\phi_p$ satisfying \eqref{lam1} and \eqref{lam2} as \emph{smoothing maps} of $\LL$ and to maps $\psi_p$ satisfying \eqref{lam1} and \eqref{lam3} as \emph{straightening maps} of $\LL$. 
\end{defn}

A smooth lamination $\mathcal L\subset \Omega$ is \emph{proper} in $\Omega$ if it is closed -- i.e. $\overline{\mathcal{L}}=\mathcal{L}$.
Any embedded smooth surface is a smooth lamination, which is 
proper if and only if the surface is.

\begin{defn}
 Let $\mathcal{L}\subset \Omega$ be a non-empty smooth lamination.  A subset 
$L\subset \mathcal{L}$ is a \emph{leaf of $\mathcal{L}$} if it is a connected, 
embedded surface and for any $p\in L$, there is an $r_p>0$ and a smoothing map $\phi_p$ so that $\mathbb{D}_1 =\phi_p(L\cap 
\mathcal{B}_{r_p}(p))$. 
 For each $p\in \mathcal{L}$, let $L_p$  be the unique leaf of $\mathcal L$ 
containing $p$.
\end{defn}

\begin{defn}
 A smooth lamination $\mathcal{L}$  is a \emph{minimal lamination} if each leaf is minimal.
\end{defn}
A sequence $\set{\Omega_i}$ of open subsets of $\Omega$ \emph{exhausts} $\Omega$ if $\Omega_{i}\subset \Omega_{i+1}$ and  $\Omega=\bigcup_{i=1}^\infty \Omega_i$.
\begin{defn}
Suppose the sequence $\set{\Omega_i}$ exhausts $\Omega$ and that $\mathcal{L}_i$ 
are  smooth proper laminations  in $\Omega_i$.   For any $0<\alpha<1$, the 
$\mathcal{L}_i$ converge in $C^{\infty, \alpha}_{loc}(\Omega)$ to 
$\mathcal{L}$, 
a proper smooth lamination in $\Omega$, provided:
\begin{enumerate}
 \item  The sets $\mathcal{L}_i$ converge to $\mathcal{L}$ in pointed Gromov-Hausdorff distance;
 \item  Smoothing maps of the $\mathcal{L}_i$ converge in $C^\infty$ to 
smoothing maps of $\mathcal{L}$.  That is, for each $p\in \mathcal{L}$ there 
is an $r_p>0$ and an $i_p>0$ so that: for $i>i_p$, 
$\mathcal{B}_{r_p}(p)\subset \Omega_i$ and for all 
$p_i\in\mathcal{B}_{\frac{1}{4}r_p}(p)\cap \mathcal{L}_i$ converging to $p$,  there are  $r_{p_i}\geq r_p$ and smoothing maps  
$\phi_i:\mathcal{B}_{\frac{1}{2} r_p}(p_i)\to B_1$ of the $\LL_i$ 
converging in $C^\infty_{loc}(\mathcal B_{\frac{1}{4}r_p}(p))$ to a 
smoothing map $\phi_p:\mathcal B_{\frac{1}{4}r_p}(p)\to B_1$ of $\LL$.
\item Straightening maps of the $\mathcal{L}_i$ converge in $C^\alpha$ to 
straightening maps of $\mathcal{L}$.  That is, for each $p\in \mathcal{L}$ 
there 
is an $r_p>0$ and an $i_p>0$ so that: for $i>i_p$, 
$\mathcal{B}_{r_p}(p)\subset \Omega_i$ and for all 
$p_i\in\mathcal{B}_{\frac{1}{4}r_p}(p)\cap \mathcal{L}_i$ converging to $p$, there are  $r_{p_i}\geq r_p$ and straightening maps  
$\psi_i:\mathcal{B}_{\frac{1}{2} r_p}(p_i)\to B_1$ of the $\LL_i$ 
converging in $C^\alpha_{loc}(\mathcal B_{\frac{1}{4}r_p}(p))$ to a 
straightening map $\psi_p:\mathcal B_{\frac{1}{4}r_p}(p)\to B_1$ of $\LL$.
\end{enumerate}
\end{defn}

The following is the natural compactness result for sequences of properly 
embedded minimal surfaces with uniformly bounded second fundamental form -- 
see~\cite[Appendix B]{cm24} for a proof.  

\begin{thm} \label{SmoothCompactThm}
 Suppose that $\set{\Omega_i}$ exhausts $\Omega$ and that $\Sigma_i$ are 
properly embedded smooth minimal surfaces in $\Omega_i$. 
If for each compact subset $U$ of $\Omega$   there is a constant $C(U)<\infty$ so that when $U\subset \Omega_i$ 
\begin{equation*}
 \sup_{U\cap \Sigma_i} |A_{\Sigma_i}|\leq C(U),
\end{equation*}
then, for any $0<\alpha<1$, up to passing to a subsequence, the $\Sigma_i$ 
converge in $C^{\infty,\alpha}_{loc}(\Omega)$ to  a smooth 
proper minimal lamination $\mathcal{L}$ in $\Omega$.
\end{thm}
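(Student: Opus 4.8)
The plan is to localize: the uniform bound on $|A_{\Sigma_i}|$ forces each $\Sigma_i$ to be, near every point, a disjoint union of smooth graphs of definite size with uniform $C^k$ estimates, and embeddedness then forces the subsequential limits of these graphs to assemble into a lamination. First I would record the standard local graph lemma: given $p\in\Omega$ and a curvature bound $C$ on a fixed geodesic ball about $p$, there is $r_0=r_0(p,C)>0$ such that whenever $q\in\Sigma_i$ lies in $\mathcal{B}_{r_0}(p)$, the component of $\Sigma_i\cap\mathcal{B}_{r_0}(q)$ through $q$ is, in geodesic normal coordinates at $q$, the graph over a domain in $T_q\Sigma_i$ of a function $u_i^q$ with $|u_i^q|+|\nabla u_i^q|+|\nabla^2 u_i^q|\le\Lambda(p,C)$. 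This is standard given the curvature bound (cf.\ \cite{cm24}): a failure would, after rescaling, produce a complete embedded minimal surface with a point of unit curvature that is nonetheless a graph of arbitrarily small gradient over arbitrarily large balls, hence a plane, a contradiction. Minimality means $u_i^q$ solves the minimal surface equation for the pulled-back metric -- a quasilinear uniformly elliptic PDE with smooth coefficients -- so Schauder estimates promote the $C^{1,1}$ bound to a $C^{2,\alpha}$ bound and elliptic bootstrapping gives, on a slightly smaller domain and for every $k$, a bound $\|u_i^q\|_{C^k}\le\Lambda_k(p,C)$ independent of $i$ and $q$.

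Next I would extract the subsequence by a diagonal argument. Fix a countable dense set $\{p_j\}\subset\Omega$ and a countable exhaustion. For each $j$ and all large $i$ (for which $\mathcal{B}_{r_0}(p_j)\subset\Omega_i$), either $\dist(p_j,\Sigma_i)$ is bounded below -- so $p_j$ is eventually irrelevant -- or, after passing to a subsequence, points $q_i\in\Sigma_i$ realizing the distance to $p_j$, their tangent planes $T_{q_i}\Sigma_i$, and the graph functions $u_i^{q_i}$ lie in a compact family and converge in $C^\infty_{loc}$. Diagonalizing over $j$ yields a single subsequence along which every such datum converges. Let $\mathcal{L}$ be the union, over all $p\in\Omega$ that are $C^\infty$-limits of points $p_i\in\Sigma_i$, of the corresponding limit graphs through $p$; the limiting graph functions are smooth and solve the limiting minimal surface equation, so each connected piece of $\mathcal{L}$ is a smooth minimal surface, and the limiting graph charts are precisely the smoothing maps $\phi_p$ of the definition of lamination, converging in $C^\infty$ by construction. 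This is clause (2) of $C^{\infty,\alpha}_{loc}$-convergence; clause (1), pointed Gromov--Hausdorff convergence, is then immediate from the uniform local graph description together with the density argument.

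It remains to see that $\mathcal{L}$ is a lamination, i.e.\ to produce the straightening maps $\psi_p$ and show they converge in $C^\alpha$. Fix $p\in\mathcal{L}$ with smoothing chart $\phi_p$ on $\mathcal{B}_{r_p}(p)$, with $r_p$ so small that every sheet of every $\Sigma_i$ meeting $\mathcal{B}_{r_p}(p)$ has tangent planes $C^0$-close to that of the disk $\phi_p^{-1}(\mathbb{D}_1)$, which is possible by the curvature bound. Then, inside $\mathcal{B}_{r_p}(p)$, each $\Sigma_i$ is a disjoint union of graphs over that disk, hence linearly ordered by height, and passing to the limit the sheets of $\mathcal{L}$ there are likewise disjoint graphs, linearly ordered, with height set $T_p\subset(-1,1)$ closed because $\mathcal{L}$ is a Hausdorff limit of closed sets. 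One builds $\psi_p$ by keeping the disk coordinate and, in the transverse direction, sending the sheet at height $t$ to $\{x_3=t\}$ while interpolating (e.g.\ linearly between consecutive bounding graphs) over the gaps. The resulting $\psi_p$ is a homeomorphism, and it is Lipschitz -- hence $C^\alpha$ for all $\alpha<1$ -- but generally no better, since the transverse spacing of sheets varies only Lipschitz-continuously along a leaf; this is precisely why the convergence is $C^{\infty,\alpha}$ rather than $C^\infty$. Carrying out the same construction for the $\Sigma_i$, whose sheets converge as graphs to those of $\mathcal{L}$, produces straightening maps $\psi_i\to\psi_p$ in $C^\alpha_{loc}$, which is clause (3). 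Finally $\mathcal{L}$ is proper: a point of $\overline{\mathcal{L}}\setminus\mathcal{L}$ would be a limit of points $q_i\in\Sigma_i$ -- using that the $\Sigma_i$ are proper in $\Omega_i$ together with the exhaustion -- and so would already lie in $\mathcal{L}$.

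The step I expect to be the main obstacle is the lamination structure: one must verify that the disjoint sheets really are globally ordered over a common disk on a scale that does not degenerate as the curvature bound is applied, that the limiting height set $T_p$ is closed, and -- most delicately -- that the local interpolations can be made consistent across overlapping charts, so that the $\psi_p$ are straightening maps of one and the same lamination and the $\psi_i$ converge to them in $C^\alpha$, no transverse smoothness being available. Everything else reduces to standard elliptic estimates and diagonalization.
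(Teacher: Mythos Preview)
The paper does not itself prove this theorem; it cites \cite[Appendix B]{cm24} and, in Remark~\ref{straighteningrem}, isolates the one non-routine ingredient: the uniform Lipschitz bound on the straightening maps, which comes from the Harnack inequality. Your outline --- local graph lemma from the curvature bound, Schauder bootstrapping to $C^k$ estimates, diagonal extraction, then building smoothing and straightening charts --- is exactly the standard argument recorded there, so in that sense you are on the same route as the cited proof.

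The substantive gap in your writeup is precisely the point the paper flags. You assert that the interpolated $\psi_p$ is Lipschitz because ``the transverse spacing of sheets varies only Lipschitz-continuously along a leaf,'' but you give no mechanism for this, and it does not follow from the $C^k$ graph bounds alone: those control each sheet individually, not the \emph{ratio} of gaps between sheets at different base points. What is needed is that the positive difference $w=u_{j+1}-u_j$ of two ordered minimal graphs solves a linear uniformly elliptic equation (the minimal surface operator linearized between the two graphs), so the Harnack inequality gives $\sup_K w\le C\inf_K w$ on interior compacta with $C$ depending only on the background data. This is exactly what makes the height-rescaling in your interpolation bi-Lipschitz, uniformly in the pair of sheets and in $i$, and hence what delivers the $C^\alpha$ convergence of the $\psi_i$ to $\psi_p$. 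Without Harnack one gets only a homeomorphism with no uniform modulus, and clause~(3) of the convergence definition is not established. You correctly identify the lamination-chart step as the main obstacle; the missing idea that resolves it is Harnack --- see (B.3)--(B.5) of \cite{cm24} and \cite[Theorem~1.1]{sol1}, as the paper notes.
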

\begin{rem} \label{straighteningrem}
 While the straightening maps converge in $C^\alpha$, their Lipschitz norms are uniformly bounded  on compact subsets of $\Omega$.  This follows from the Harnack inequality and is used in the proof of Theorem \ref{SmoothCompactThm} -- see (B.3) and (B.5) of~\cite{cm24} and also \cite[Theorem 1.1]{sol1}.
\end{rem}

Suppose that $\set{\Omega_i}$ exhausts $\Omega$ and that $\Sigma_i$ are
properly embedded smooth minimal surfaces in $\Omega_i$.  In light of 
Theorem~\ref{SmoothCompactThm},
we define the \emph{regular} points of the sequence $\mathcal{S}=\set{\Sigma_i}$ to be the set of points
\begin{equation*}
 \reg(\mathcal{S}):= \set{p\in \Omega: \exists \rho>0 \mbox{ s.t. } \limsup_{i\to \infty} \sup_{B_\rho(p)\cap \Sigma_i} |A_{\Sigma_i}| <\infty }
\end{equation*}
and the \emph{singular} points of $\mathcal{S}$ to be the set
\begin{equation*}
\sing (\mathcal{S}):= \set{p\in \Omega: \forall \rho>0,  \liminf_{i\to \infty} 
\sup_{B_\rho(p)\cap \Sigma_i} |A_{\Sigma_i}| =\infty }.
\end{equation*}
Clearly, $\reg(\mathcal{S})$ is an open subset of $\Omega$ while 
$\sing(\mathcal{S})$ is closed in $\Omega$.  In general, $\sing 
(\mathcal{S})$ is a strict subset of $\Omega\backslash \reg(\mathcal{S})$. 
However, an elementary argument -- see \cite[Lemma I.1.4]{cm24} -- implies that 
there is a subsequence $\mathcal{S}'$ of $\mathcal{S}$ so that 
$\Omega=\reg(\mathcal{S}')\cup \sing (\mathcal{S}')$.  From now on we consider 
only sequences for which this decomposition holds.

We say that $\LL$ is the \emph{limit lamination} of $\mathcal{S}$ if for some 
$\alpha>0$, $\Sigma_i \to \LL$ in $C^{\infty,\alpha}_{loc}(\reg(\mathcal{S}))$. 
Theorem \ref{SmoothCompactThm} implies that, up to passing to a subsequence, any 
sequence $\mathcal{S}$, possesses a limit lamination $\LL$. 
Inspired by \cite{wh10}, we make the following definition:
\begin{defn}\label{minimalsurfseqdef}
We say a quadruple $(\Omega, K, \mathcal{L}, \mathcal{S})$ consisting of
\begin{enumerate}
 \item A  Riemannian three-manifold $\Omega$ exhausted by $\set{\Omega_i}$;
 %\item an increasing set $\Omega=\set{\Omega_i}$ of open subsets $\Omega_i\subset N$ converging to $\Omega$;
 \item A closed set $K\subset \Omega$;
 \item A proper smooth minimal lamination $\mathcal{L}$ in $\Omega\backslash 
K$; and
 \item A sequence $\mathcal{S}=\set{\Sigma_i}$ of properly embedded minimal 
surfaces $\Sigma_i$ in $\Omega_i$,
\end{enumerate}
is a \emph{minimal surface sequence} if 
\begin{enumerate}
%\item $\overline{\Omega}$ is complete in $N$;
%\item the $\Omega_i$ are an exhaustion of $\Omega$;
\item  $\sing(\mathcal{S})=K$; and
\item  $\Sigma_i\backslash K$ converge in $C^{\infty,\alpha}_{loc}(\Omega\backslash K)$ to $\mathcal{L}$ for some $0<\alpha<1$.
\end{enumerate}
If all the surfaces in $\mathcal{S}$ are disks, then we say this is a 
\emph{minimal disk sequence}. 
\end{defn}

The work of Colding and Minicozzi \cite{cm21, cm22, cm23, cm24} implies that if 
$(\Omega, K, \mathcal{L}, \mathcal{S})$ is a minimal disk sequence, then $K$ and 
$\mathcal{L}$ have a great deal of structure. We say a leaf $L$ of $\LL$ 
is \emph{regular at $p\in K$} if $p\in \overline L$, the closure 
of $L$ in $\Omega$, and there is an $r>0$ so that $\mathcal{B}_r(p)\cap 
\overline{L}$ is an embedded smooth surface proper in $\mathcal{B}_r(p)$. Then 
the 
following holds.
\begin{prop}\label{CM}
If $(\Omega, K, \mathcal{L},\mathcal{S})$ is a minimal disk sequence, then there is an embedded one-dimensional Lipschitz curve $K'\subset\Omega$ such that $K\subset K'$ and
\begin{enumerate}
\item If $p\in K$, then there is a leaf $L\in \LL$ which is regular at $p$;
\item If $L$ is regular at $p\in K$, then 
$\overline L$ meets $K$ transversely -- in the strong sense that $\overline L$ 
meets $K'$ transversely at $p$.  
\end{enumerate}
\end{prop}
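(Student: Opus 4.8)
The plan is to localize near points of $K$ and assemble the structure theory for sequences of embedded minimal disks due to Colding and Minicozzi \cite{cm21, cm22, cm23, cm24}; this proposition is essentially a local repackaging of their results, so the real work is extracting, and then globalizing, the precise local statements one needs.

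First I would fix $p\in K$ and work in a small geodesic ball $\mathcal{B}_r(p)$ with $\mathcal{B}_r(p)\subset\Omega_i$ for large $i$. Since $\Omega=\reg(\mathcal{S})\cup\sing(\mathcal{S})$, there are $p_i\in\Sigma_i$ with $p_i\to p$ and $|A_{\Sigma_i}|(p_i)\to\infty$. Because each $\Sigma_i$ is a disk, the component of $\Sigma_i\cap\mathcal{B}_r(p)$ through a point near $p$ is simply connected, so the sequence is locally simply connected at $p$. I would then invoke the one-sided curvature estimate together with the structural description of locally simply connected sequences of disks (\cite{cm24}, resting on \cite{cm21, cm22} and the chord--arc bounds of \cite{cm23}): after shrinking to some $\mathcal{B}_{r'}(p)$, the relevant pieces of $\Sigma_i$ are highly--sheeted double--valued graphs, they converge to a minimal foliation of $\mathcal{B}_{r'}(p)$, the blow--up set $K\cap\mathcal{B}_{r'}(p)$ lies on a Lipschitz graph $\gamma_p$ over the geodesic $\ell_p$ through $p$ orthogonal to the leaves, and each leaf of that foliation extends smoothly across $\gamma_p$ as an embedded minimal surface. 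Matching the foliation near $\gamma_p$ with $\LL$ produces a leaf $L$ regular at $p$, which is statement (1), and also a local Lipschitz arc through every point of $K$.

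Next I would globalize. On an overlap $\mathcal{B}_{r'}(p)\cap\mathcal{B}_{r''}(q)$ the arcs $\gamma_p$ and $\gamma_q$ must agree as sets on the part meeting $K$, since there the blow--up direction is intrinsically determined by the sequence; covering the closed set $K$ by countably many such balls and using the corresponding gluing in \cite{cm24}, one obtains a single embedded one--dimensional Lipschitz curve $K'\subset\Omega$ with $K\subset K'$, embeddedness and one--dimensionality being local and coming from the local graph pictures. For the transversality in (2): if $L$ is regular at $p\in K$ then $\mathcal{B}_s(p)\cap\overline L$ is a smooth minimal surface for small $s$; shrinking $s$ and applying the local structure once more, $\overline L$ is a small--gradient graph over $T_p\overline L$ while $K'$ is a small--Lipschitz--constant graph over $\ell_p\perp T_p\overline L$, so every limit of secant directions of $K'$ based at $p$ lies in a narrow cone about $\ell_p$ and is uniformly transverse to $T_p\overline L$; that is the asserted strong transversality.

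The main obstacle is not a genuinely new argument but this careful extraction: checking that the local blow--up arcs glue to one embedded Lipschitz curve $K'$, and that the limit leaf through $\gamma_p$ is honestly smooth at $p$ with no residual spiralling. This is exactly where the no--mixing and removable--singularity statements of \cite{cm24} and the chord--arc estimates of \cite{cm23} must be used with care, and where one should double-check that the Colding--Minicozzi results, often phrased globally and in $\R^3$ or for a fixed ambient genus, do yield the purely local conclusions needed here for a general $\Omega$.
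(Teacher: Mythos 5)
The paper gives no proof of this proposition: it is stated as a direct consequence of the Colding--Minicozzi structure theory \cite{cm21, cm22, cm23, cm24}, with White \cite{wh10} and Meeks \cite{me25} cited immediately afterwards for regularity refinements of $K'$. Your sketch is a reasonable reconstruction from those sources, so in that sense it takes the same approach as the paper. One imprecision is worth flagging: you assert that near $p\in K$ the disks converge to a minimal \emph{foliation} of a small ball $\mathcal{B}_{r'}(p)$. That is the conclusion when the domains exhaust all of $\R^3$, but it fails on a fixed domain -- the Colding--Minicozzi example in $B_1$ (recalled in the introduction, from \cite{cm28}) has $K=\{0\}$ and a local limit lamination with two spiralling leaves plus a punctured disk, which is not a foliation of any ball about the origin. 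Fortunately items (1) and (2) do not require the foliation conclusion: the multi-valued graph structure together with the one-sided curvature estimate and the removable-singularity result still place the blow-up set on a Lipschitz arc and produce a distinguished leaf (the collapsed axis sheet) that extends smoothly and transversely across that arc, even when other leaves spiral. With that adjustment your reconstruction is consistent with the sources the paper cites, and your closing caveats about checking that the $\R^3$-phrased Colding--Minicozzi theorems descend to the purely local Riemannian statements needed here are precisely the right places to exercise care.
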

 White \cite{wh10} has shown that the regularity of $K'$ can be taken to be 
$C^1$.  Furthermore,  Meeks \cite{me25} has shown that if $K=K'$, then $K'$ can 
be taken to be $C^{1,1}$.

We will consider also the following related objects. 
\begin{defn}\label{MinConfigDef}
 We say a quadruple $(\Omega, K,\mathcal{L}, \mathcal{L}_0)$ consisting of
 \begin{enumerate}
  \item A connected open subset $\Omega\subset N$;
  \item A  closed set $K\subset \Omega$;
  \item A smooth minimal lamination $\LL_0$ in $\Omega\backslash K$; and
  \item A smooth proper minimal lamination $\LL$ in $\Omega\backslash K$,
 \end{enumerate}
 is a \emph{minimal surface closure } provided,
 \begin{enumerate}
  \item For all $p\in K$ and $\rho>0$, $\sup_{L\in \set{\LL_0}} 
\sup_{B_\rho(p)\cap L} |A_L|=\infty$; and
  \item $\overline{\LL}_0=\LL$, where here $\overline{\LL}_0$ is the closure of $\LL_0$ in $\Omega\backslash K$.
 \end{enumerate}
If all the leaves of $\LL_0$ are disks, then we say this is a \emph{minimal disk closure}. 
 \end{defn}
The limit laminations $\LL$ of minimal disk sequences and of minimal disk closures share many properties. Therefore, it is convenient to introduce the following definition.
\begin{defn} A smooth minimal lamination $\mathcal{L}$ in a Riemannian 
three-manifold $\Omega$ is a \emph{simple minimal lamination in 
$\Omega$} if there is a relatively closed set $K\subset \Omega$ and either
\begin{enumerate}
 \item $(\Omega, K, \mathcal{L}, \mathcal{S})$ is a minimal disk sequence for some $\mathcal{S}$; or
 \item $(\Omega, K, \mathcal{L},\mathcal{L}_0)$ is a minimal disk closure  for some $\mathcal{L}_0$.
\end{enumerate}
\end{defn}
% The aim of this paper is to investigate the topology of leaves of a simple minimal lamination. For the most part the proofs in the two cases are identical. However, as we will see, the topological type of the leaves of a minimal lamination of a minimal disk sequence is more restricted. 

% \begin{defn} We call a smooth minimal lamination $\mathcal{L}$ of an open subset $\Omega\subset N$ a \emph{simple minimal lamination in $\Omega$} if it is the minimal lamination of a minimal disk sequence or if it satisfies the following property:
% \begin{quote}
% There exists a smooth lamination $\mathcal{L}_0$ of $\Omega\subset N$ whose leaves are disks and such that $\mathcal{L}=\overline{\mathcal{L}}_0$.
% \end{quote}
% \end{defn}
% The aim of this paper is to investigate the topology of leaves of a simple minimal lamination. For the most part the proofs in the two cases are identical. However, as we will see, the topological type of the leaves of a minimal lamination of a minimal disk sequence is more restricted. 

\section{Simple Lifts}
In order to proceed we will need a technical definition. 
\begin{defn}
 Let $\Sigma$ be an embedded surface in a fixed Riemannian three-manifold 
$\Omega$. The surface $\Sigma$ has the \emph{simple lift property} 
if, for any $\delta>0$, $\gamma:[0,1]\to \Sigma$ a piece-wise $C^1$ curve, and 
open pre-compact subset $U\subset \Sigma$ with $\gamma\subset U$, there exist:
 \begin{enumerate}
  \item A constant $\epsilon=\epsilon(U,\delta)>0$; 
  \item An embedded minimal disk $\Delta$ in $\Omega$; and
   \item A $\delta$-lift of $\gamma$, $\widehat{\gamma}:[0,1]\to 
\mathcal{N}_\delta(U)$, 
  \end{enumerate}
such that 
  \begin{enumerate}
    \item $\widehat{\gamma}\subset \Delta\cap \mathcal{N}_{\epsilon }(U)$;
  \item $\Delta\cap \mathcal{N}_{\epsilon} (U)$ is a $\delta$-graph over $U$; 
\item The connected component of $\Delta\cap \mathcal{N}_{\epsilon} (U)$ 
containing $\widehat \gamma$ is a $\delta$-cover of $U$.
 \end{enumerate}
 Such $\widehat \gamma$ is called a \emph{simple $\delta$-lift} of $\gamma$ into 
$\Omega$. 
\end{defn}
If $\Sigma$ has the simple lift property in $\Omega$ and $\gamma$ is a curve in 
$\Sigma$, then $\gamma$ has the \emph{embedded lift property} if there is a 
a $\delta_0>0$ so that for all $\delta_0>\delta>0$, all simple $\delta$-lifts 
of $\gamma$ are embedded. Clearly, if $\gamma$ is an embedded curve, then it 
has the embedded lift property.

Throughout this paper we study the topology of minimal surfaces with the simple lift property. This is relevant to the study of the topology of a leaf of a simple minimal lamination thanks to the following proposition.
\begin{prop}\label{simpleliftprop}
Leaves of a simple minimal lamination in $\Omega$ have the simple lift property.
\end{prop}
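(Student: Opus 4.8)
The plan is to verify the simple lift property separately in the two cases defining a simple minimal lamination: the minimal disk sequence case and the minimal disk closure case. In both situations the approximating objects are embedded minimal disks that converge to $\mathcal{L}$ in $C^{\infty,\alpha}_{loc}$ on the regular set; the only difference is whether these disks come from the sequence $\mathcal{S}$ or from leaves of the sublamination $\mathcal{L}_0$. So I would fix a leaf $L=L_p$, a piece-wise $C^1$ curve $\gamma:[0,1]\to L$, a precompact open $U\subset L$ with $\gamma\subset U$, and $\delta>0$, and then produce the disk $\Delta$, the constant $\epsilon$, and the simple $\delta$-lift $\widehat\gamma$.

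\textbf{Step 1: Local graphical control from convergence.} First I would use the definition of $C^{\infty,\alpha}_{loc}$ convergence (in particular the convergence of smoothing maps in $C^\infty$) to conclude that, for $i$ large, the $\Sigma_i$ (resp.\ leaves of $\mathcal{L}_0$) are locally smooth graphs over $\overline{U}\subset \reg(\mathcal{S})$ (resp.\ over $\overline U$ away from $K$) with $C^1$-norm going to zero. Precisely: cover $\overline U$ by finitely many balls $\mathcal{B}_{r_q}(q)$ on which the smoothing maps converge; choose $\epsilon=\epsilon(U,\delta)>0$ small enough that $\mathcal{N}_\epsilon(U)$ is a regular neighborhood (possible since $U$ is precompact in an embedded surface) and small enough relative to the $\delta$-parallel condition. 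Then for $i$ sufficiently large, $\Sigma_i\cap \mathcal{N}_\epsilon(U)$ is a disjoint union of smooth $\delta$-graphs over open subsets of $U$; this uses that the graphs converge in $C^1$ to copies of leaves, and that a $C^1$-small graph over a piece of $L$ is automatically $\delta$-parallel to $L$ in the sense defined, after shrinking $\epsilon$.

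\textbf{Step 2: Finding a sheet that covers $U$ and lifting $\gamma$.} The key point is to select, among the graphical sheets of $\Sigma_i\cap\mathcal{N}_\epsilon(U)$, one whose $\Pi_L$-image is all of $U$ — i.e.\ a genuine $\delta$-cover. Here I would exploit that the $\Sigma_i$ are \emph{connected} (they are disks) together with the Gromov--Hausdorff convergence $\Sigma_i\to\mathcal{L}$: for $i$ large, $\Sigma_i$ passes $\epsilon$-close to every point of the compact set $\overline U$, so the sheets of $\Sigma_i$ in $\mathcal{N}_\epsilon(U)$ together project onto all of $U$. To get a \emph{single} connected sheet projecting onto $U$, one lifts $\gamma$ itself: starting at a point of $\Sigma_i$ near $\gamma(0)$, the $C^1$-graphical structure lets one continue a $\delta$-lift $\widehat\gamma$ of $\gamma$ inside $\Sigma_i\cap\mathcal{N}_\epsilon(U)$ for all of $[0,1]$ — the lift cannot leave $\mathcal{N}_\epsilon(U)$ because $\gamma$ stays in $U$ and the sheets are graphs of height $<\epsilon$. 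The connected component $\Delta'$ of $\Sigma_i\cap\mathcal{N}_\epsilon(U)$ containing $\widehat\gamma$ is then a $\delta$-graph over $U$; to see $\Pi_L(\Delta')=U$, note $U$ is connected and the set of points of $U$ lying under $\Delta'$ is open (graphical extension) and closed (a limit point is again covered since locally $\Sigma_i$ is a union of graphs and $\Delta'$ being a component is locally one of them), hence all of $U$. Finally set $\Delta$ to be (a slight enlargement to a genuine embedded minimal disk in $\Omega$ of) $\Sigma_i$ itself in the sequence case, and in the closure case take $\Delta$ to be a disk leaf of $\mathcal{L}_0$ whose nearby sheet does the job, via the same argument applied to the family $\{\mathcal{L}_0\}$ using the closure condition $\overline{\mathcal{L}}_0=\mathcal{L}$ to guarantee approximating disk leaves near $L$.

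\textbf{Main obstacle.} I expect the main difficulty to be Step 2: upgrading ``the sheets of $\Sigma_i$ collectively project onto $U$'' to ``one connected sheet (the component containing $\widehat\gamma$) projects onto all of $U$,'' and simultaneously controlling that the $\delta$-lift of $\gamma$ does not escape $\mathcal{N}_\epsilon(U)$ or jump between sheets. This requires care in ordering the quantifiers — $\epsilon$ must be chosen after $U$ and $\delta$ but the index $i$ is chosen last depending on $\epsilon$ — and in the closure case one must also check that $K=\sing$ in the appropriate sense so that $\overline U$ stays in the region of good convergence. The $\delta$-parallel bookkeeping (relating $|\vV^\perp|,|\vV^\top|$ and $|d(\Pi_L)(\vV)|$ to the $C^1$-smallness of the graph) is routine once $\epsilon$ is small, and connectedness of $U$ handles the ``covers $U$'' claim via a standard open-closed argument.
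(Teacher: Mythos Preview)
Your setup and identification of the obstacle are correct, but the proposed resolution has a genuine gap: the ``open--closed'' argument for $\Pi_L(\Delta')=U$ does not work. The projection $\Pi_L(\Delta')$ is certainly open in $U$, but it need not be closed. If $q_n\in\Pi_L(\Delta')$ with $q_n\to q\in U$, the preimages $p_n\in\Delta'$ may have heights tending to $\pm\epsilon$, so that $p_n$ escapes through the top or bottom of $\mathcal N_\epsilon(U)$; properness of $\Delta'$ in the \emph{open} set $\mathcal N_\epsilon(U)$ gives nothing here. Concretely, among the (possibly infinitely many) sheets of $\Sigma_i\cap\mathcal N_\epsilon(U)$, those near height $\pm\epsilon$ will typically exit through the horizontal boundary over an interior point of $U$, so ``$\delta$-graph'' alone does not force ``$\delta$-cover.'' The same objection applies to your claim that the lift of $\gamma$ cannot leave $\mathcal N_\epsilon(U)$: it can, precisely when the sheet it lives on runs out of height.

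What the paper actually uses to close this gap is a quantitative height control coming from the uniform Lipschitz bounds on the straightening maps (equivalently, the Harnack inequality for positive Jacobi fields; see Remark~\ref{straighteningrem}). This yields a constant $C=C(U)$ and the Gronwall-type estimate
\[
\left|\tfrac{d}{dt}\dist^\Omega(\sigma(t),\widehat\sigma(t))\right|\le C\,\dist^\Omega(\sigma(t),\widehat\sigma(t)),
\]
so that a lift starting at height at most $\mu\epsilon$ with $\mu=\tfrac34 e^{-2C(l+d)}$ (where $l$ is the length of $\gamma$ and $d$ the diameter of $U$) stays below height $\epsilon$ along any curve of length $\le 2(l+d)$. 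One then picks the approximating disk (a leaf of $\mathcal L_0$, or $\Sigma_i$ for $i$ large) to pass within $\mu\epsilon$ of $\gamma(0)$. This is the missing ingredient in your Step~2; without it, neither the existence of the lift $\widehat\gamma$ on all of $[0,1]$ nor the surjectivity $\Pi_L(\Delta')=U$ follows.
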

\begin{proof}
We first consider the case of a minimal disk closure. If $L$ is a leaf of $\LL$ which is a disk, then any curve in $L$ is its own simple $\delta$-lift in any pre-compact open set of $L$ containing such curve and there is nothing to prove. If $L$ is not a disk, then $L\subset \mathcal{L}\backslash \mathcal{L}_0$.  Hence, for any point $p\in L$ there exists a sequence of points $p_i\in \LL_0$ so that $p_i\to p$. 

Note first that the definition of smooth lamination -- specifically the 
existence of Lipschitz straightening maps -- implies that for each pre-compact 
open subset $U$ of $L$ there is a constant $C=C(U)$ so that if $1>C\lambda>0$,  
then, for each leaf $L'$ of $\LL_0$, $\mathcal{N}_\lambda(U)\cap 
L'$ is a -- possibly empty -- $C\lambda$-graph over $U$. Given a curve 
$\gamma\colon [0,1]\to L$  and $U$ some pre-compact open subset of $L$ so that 
$\gamma\subset U$, let $l$ denote the length of $\gamma$ and let $d$ 
denote the diameter of $U$.  For any $\delta>0$, choose $\epsilon>0$ such that 
$C\epsilon<\min \{1,\delta\}$. Let $\mu=\frac 34 e^{-2C(l+d)}$ and pick 
$L_\mu$ to be a leaf of $\mathcal L_0$  which satisfies
$\mathcal{N}_{\mu\epsilon}(p)\cap L_\mu\neq \emptyset$ where here 
$p=\gamma(0)$. Let $\Gamma$ be a component of $L_\mu\cap \mathcal 
{N}_{\epsilon}(U)$ which contains a point $q\in 
\mathcal{N}_{\mu\epsilon}(p)\cap\Gamma$. 

The leaf $L_\mu$ is, by definition, a disk and we have chosen $\epsilon>0$ so 
that $L_\mu\cap \mathcal{N}_{\epsilon}(U)$ is a $\delta$-graph over $U$. We 
claim that $\Gamma$ is a $\delta$-cover of $U$ containing a $\delta$-lift of 
$\gamma$. This follows by showing that any curve in $U$ of length at most 
$2(l+d)$ starting at $p$ has a lift in $\Gamma$ starting at 
$q$. By construction, this lift is necessarily a $\delta$-lift. Indeed, 
if $\sigma:[0,T]\to U$ is parameterized by arclength and 
$\widehat{\sigma}:[0,T']\to \Gamma$ satisfies 
$\Pi_\Sigma(\widehat{\sigma}(t))=\sigma(t)$ for some $0<T'\leq T$ , then
\begin{equation*}
 \left|\frac{d}{dt} \dist^\Omega (\sigma(t),\widehat{\sigma}(t))\right|\leq 
C\dist^\Omega(\sigma(t),\widehat{\sigma}(t))
\end{equation*}
and so
\[
 \dist^\Omega (\sigma(t),\widehat{\sigma}(t))\leq 
e^{C t} \dist^\Omega (p,q)<\epsilon \mu e^{C t} <\epsilon.
\]
Where we used that $t\leq T\leq l+d$ to obtain the final inequality.
 Furthermore, if $t<T$, then the lift $\widehat{\sigma}(t)$ may be extended past 
$t$ provided  $\dist^\Omega (\sigma(t), \widehat{\sigma}(t))<\epsilon$. This 
proves that the leaf of a minimal disk closure has the simple lift property as 
claimed.

In the case of a minimal disk sequence, the argument is identical to the one 
above except that it uses the Harnack inequality to obtain the bound on the 
Lipschitz norms of straightening maps.  We refer to Remark 
\ref{straighteningrem} and to ~\cite[Appendix B]{cm24} for the details on how to 
obtain this bound. 
\end{proof}

A surface with the simple lift property is one for which, in an effective sense, the universal cover of the surface can be properly embedded as a minimal disk near the surface. For this reason, to understand the topology of the surface, it is important to understand the lifting behavior of closed curves.  With this in mind, we give the following definition.
\begin{defn}
 Let $\Sigma\subset \Omega$ be an embedded minimal surface with the simple lift 
property. If $\gamma\colon [0,1]\to \Sigma$ is a piece-wise $C^1$ closed curve, 
then $\gamma$ has the \emph{open lift property} if there  exists a $\delta_0>0$ 
so that, for all  $\delta_0>\delta>0$, $\gamma$ does not have a closed simple 
$\delta$-lift $\widehat{\gamma}:[0,1]\to \mathcal{N}_\delta(\Sigma)$.  Otherwise, 
$\gamma$ has the \emph{closed lift property}.
\end{defn}
If a closed curve $\gamma$ has the closed lift property, then there is a sequence $\delta_i\to 0$ so that there are closed simple $\delta_i$-lifts $\widehat{\gamma}_i$ of $\gamma$.  If it is possible to choose these lifts to be embedded we say $\gamma$ has the \emph{embedded closed lift property}.

The next lemma says that if two loops satisfying certain geometric conditions 
have the open lift property, then their commutator has the closed lift property. 
Very roughly speaking, it does this by constructing an ``effective'' 
homomorphism from the space of loops in the leaf to $\mathbb Z$.  Indeed, a 
curve $\widehat \gamma$ is a lift of $\gamma$ if and only if $\widehat \gamma 
^{-1}$ 
is a lift of $\gamma^{-1}$. Thus, $\gamma$ has the closed  lift property if and 
only if $\gamma^{-1}$ does.
\begin{prop}\label{CommutatorClosedLiftProp}
  Let $L\subset \Omega$ be an embedded minimal surface with the simple lift 
property and let
  \[   
     \alpha: [0,1]\to  L \mbox{ and } \beta:[0,1]\to  L          
        \]
 be closed piece-wise $C^1$ curves satisfying the following properties:
 \begin{enumerate}
\item Both $\alpha$ and $\beta$ have the open lift
property; 
  \item $\alpha\cap \beta=\set{p_0}$ where $p_0=\alpha(0)=\beta(0)$;
 \item There exists a two sided pre-compact open set $U\subset L$ with 
$\alpha\cup\beta\subset U$.
 \end{enumerate}
 Then the curve  $\mu=\alpha\circ\beta \circ\alpha^{-1}\circ \beta^{-1}$ has the 
closed lift property.
If, in addition, both $\alpha$ and $\beta$ have the embedded lift property,
then either $\mu$ has the embedded closed lift property, or one of 
the following two curves has the embedded closed lift property:
\[
\alpha\circ\beta  \quad \text{or} \quad \beta \circ\alpha^{-1}.
\]  
\end{prop}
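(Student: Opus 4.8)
The plan is to exploit the ``effective homomorphism to $\mathbb Z$'' idea hinted at in the remark preceding the proposition. Fix $\delta>0$ small (smaller than the $\delta_0$'s coming from the open lift property of $\alpha$ and $\beta$, and small enough that all the relevant $\delta$-neighborhoods over $U$ are regular and the straightening constant $C=C(U)$ gives $C\delta<1$). Take $\epsilon=\epsilon(U,\delta)$ and the embedded minimal disk $\Delta$ produced by the simple lift property applied to a curve that traverses all of $\mu$; since $U$ is two-sided, $\mathcal N_\epsilon(U)$ has two components after removing $U$, and $\Delta\cap\mathcal N_\epsilon(U)$, being a $\delta$-graph over $U$ that contains a $\delta$-cover $\Gamma$ of $U$, is a (possibly disconnected) multigraph. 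Because $\Delta$ is an embedded disk, hence simply connected, $\Gamma\to U$ is the restriction of a covering-space-like map and one can define an integer-valued ``sheet difference'' or winding count as follows: starting from the basepoint lift $\widehat{p_0}\in\Gamma$, lift $\mu$ (this is possible because $\Delta$ realizes a $\delta$-lift of $\mu$) and record, at each return to the fiber over $p_0$, the signed number of sheets separating the current point from $\widehat{p_0}$, using the linear order on the fiber $\Pi_\Sigma^{-1}(p_0)\cap\Gamma$ induced by the transverse orientation of $U$ (here two-sidedness is essential).

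The key algebraic step is that this sheet count is additive under concatenation and is a homomorphism $w:\pi_1^{\text{pw }C^1}(U,p_0)\to\mathbb Z$ in the effective sense that it is well-defined on the lifts produced by $\Delta$. A closed curve $\sigma$ in $U$ has a closed simple $\delta$-lift in $\Gamma$ precisely when $w(\sigma)=0$; this is the translation of the open/closed lift dichotomy. Now $\alpha$ has the open lift property, so for $\delta$ small $\alpha$ has no closed $\delta$-lift, which forces $w(\alpha)\neq 0$; similarly $w(\beta)\neq 0$. Since $\mathbb Z$ is abelian, $w(\mu)=w(\alpha)+w(\beta)-w(\alpha)-w(\beta)=0$, so $\mu$ has a closed simple $\delta$-lift. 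Letting $\delta\to 0$ gives a sequence of closed simple $\delta_i$-lifts of $\mu$, i.e. $\mu$ has the closed lift property. For the ``in addition'' clause, suppose $\alpha$ and $\beta$ have the embedded lift property; then for $\delta$ small all simple $\delta$-lifts of $\alpha$ and of $\beta$ are embedded. The lift of $\mu$ we constructed is a concatenation of four embedded arcs lying in $\Gamma$: a lift of $\alpha$, then of $\beta$, then of $\alpha^{-1}$, then of $\beta^{-1}$, and it is a closed curve. If this closed curve is embedded we are done. If not, two of the four arcs must cross; since the $\alpha$-arc and the $\alpha^{-1}$-arc are lifts of the same curve $\alpha$ with $w(\alpha)\neq 0$ they live over disjoint ``bands'' of sheets and cannot meet except possibly at the common endpoints lying over $p_0$, and likewise for the two $\beta$-arcs; so the crossing must be between an $\alpha$-arc and a $\beta$-arc. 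Tracking which pair crosses and cutting the closed loop at that intersection point produces a shorter closed loop which is a closed simple $\delta$-lift of either $\alpha\circ\beta$ or $\beta\circ\alpha^{-1}$ (the two ways of splicing the four arcs at an $\alpha$-$\beta$ intersection), and it is embedded because it is a sub-arc of an embedded multigraph that closes up; doing this for all $\delta_i$ gives the embedded closed lift property for one of $\alpha\circ\beta$ or $\beta\circ\alpha^{-1}$ (passing to a further subsequence so that the same curve works for infinitely many $i$).

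I would organize the write-up as: (1) set up the fiber order and the sheet-count $w$ on lifts inside $\Delta$, and check additivity under concatenation; (2) prove the dictionary ``closed simple $\delta$-lift exists $\iff$ $w=0$'' for curves whose image lies in $U$; (3) deduce $w(\alpha),w(\beta)\neq 0$ from the open lift property, hence $w(\mu)=0$ and $\mu$ has the closed lift property after sending $\delta\to 0$; (4) under the embeddedness hypothesis, analyze intersections among the four embedded lifted arcs, show only $\alpha$-$\beta$ crossings can occur, and splice to extract an embedded closed lift of $\alpha\circ\beta$ or $\beta\circ\alpha^{-1}$.

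The main obstacle I anticipate is step (4) — the combinatorial/topological bookkeeping of how the four lifted arcs sit inside the embedded multigraph $\Delta\cap\mathcal N_\epsilon(U)$, and arguing rigorously that a crossing between an $\alpha$-arc and a $\beta$-arc, when the loop is cut there and re-spliced, yields a lift that genuinely projects to $\alpha\circ\beta$ or to $\beta\circ\alpha^{-1}$ (rather than to some other concatenation) and that the resulting loop is both closed and embedded. A secondary subtlety is making the ``effective homomorphism'' honest: $w$ is a priori only defined on the specific lifts furnished by a single disk $\Delta$, so one must be careful that the value $w(\alpha)$ computed inside $\Delta$ does not depend on extraneous choices, or alternatively phrase everything for a fixed $\Delta$ chosen once per $\delta$ so that additivity is all that is needed. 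I expect the geometric inputs (regularity of $\mathcal N_\epsilon(U)$, the $\delta$-graph/$\delta$-cover properties, and the exponential estimate controlling lifts, exactly as in the proof of Proposition~\ref{simpleliftprop}) to be routine given the earlier machinery.
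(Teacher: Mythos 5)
Your high-level plan is the same as the paper's — both proofs introduce a signed ``sheet count'' and cancel it on a commutator — but the point you set aside as a ``secondary subtlety'' is in fact the entire technical content of the proposition, and your suggested workaround does not close the gap. The problematic equality is $w(\alpha^{-1})=-w(\alpha)$ (and likewise for $\beta$). Within the single lifted loop $\widehat{\mu}_i$, the $\alpha$-arc and the $\alpha^{-1}$-arc begin at \emph{different} sheets, so ``additivity under concatenation'' of the sheet count along $\widehat\mu_i$ tells you nothing about how the two sheet jumps compare; what you actually need is that the sheet jump of a lift of $\alpha$ in $\Delta_i$ is independent of the sheet at which the lift starts. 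This is exactly what the paper establishes with its ``parallel lifts'': the reversal of $\widehat\alpha_i^{-1}$ is a second lift of $\alpha$ inside the same embedded disk $\Delta_i$; because $\Delta_i$ is embedded and $U$ is two-sided, the signed number of sheets of $\Delta_i\cap\mathcal N_{\epsilon_i}(U)$ between this lift and $\widehat\alpha_i$ is integer-valued and varies continuously in the parameter, hence is constant, which forces $m_i[\alpha^{-1}]=-m_i[\alpha]$. The paper moreover builds the whole interpolating family $\widehat\alpha_i[k]$ (and splits into cases on the signs of $m_i[\alpha],m_i[\beta]$) precisely to justify that these intermediate lifts exist and are trapped. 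Simply ``phrasing everything for a fixed $\Delta$ so that additivity is all that is needed'' does not supply any of this, so step (1)/(2) of your outline is not routine — it is where the proof lives.

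Two smaller issues. First, your ``disjoint bands'' reason for why the $\alpha$-arc and the $\alpha^{-1}$-arc cannot cross is false as stated: the endpoint sheet ranges $[0,m_i[\alpha]]$ and $[m_i[\beta],m_i[\beta]+m_i[\alpha]]$ overlap whenever $|m_i[\beta]|<|m_i[\alpha]|$. The correct reason they meet only over $p_0$ is the constant-gap argument above (or, more simply, that any intersection of two lifts over a simple closed curve $\alpha$ must occur over a self-intersection of $\alpha$, i.e.\ over $p_0$). Second, for the ``in addition'' clause your crossing-and-resplicing is more elaborate than needed and is not airtight as written. Since $\alpha\cap\beta=\set{p_0}$ and each of the four arcs is itself embedded, any self-intersection of $\widehat\mu_i$ occurs over $p_0$, i.e.\ is a coincidence $\widehat p_i(j)=\widehat p_i(k)$ of two of the five endpoint lifts; the open lift property rules out $j,k$ adjacent, and $\widehat p_i(0)=\widehat p_i(3)$ is ruled out because it would force $m_i[\beta]=0$. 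What remains is $\widehat p_i(0)=\widehat p_i(2)$, giving the embedded closed lift $\widehat\alpha_i\circ\widehat\beta_i$ of $\alpha\circ\beta$, or $\widehat p_i(1)=\widehat p_i(3)$, giving $\widehat\beta_i\circ\widehat\alpha_i^{-1}$ for $\beta\circ\alpha^{-1}$. This is the paper's one-line remark, made precise.
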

\begin{proof}
  Consider a sequence $\delta_i\to 0$. 
 As $\Sigma$ has the simple lift property, there exist constants $\epsilon_i$, 
embedded minimal disks $\Delta_i$ and simple $\delta_i$-lifts of $\mu$, 
$\widehat{\mu}_i$, so that $\Delta_i\cap \mathcal{N}_{\epsilon_i}(U)$ is a 
$\delta_i$-graph over $U$, $\widehat{\mu}_i\subset \Delta_i$ and the connected 
component $\Gamma_i$ of  $\Delta_i\cap \mathcal{N}_{\epsilon_i}(U)$ containing 
$\mu_i$ is a $\delta_i$-cover of $U$.  By reparameterizing appropriate 
restrictions of $\widehat{\mu}_i$, we obtain lifts of $\alpha$, $\alpha^{-1}$, 
$\beta$  and $\beta^{-1}$.  We write 
$\widehat{\mu}_i=\widehat{\alpha}_i\circ\widehat{\beta}_i \circ\widehat{\alpha}^{-1}_i\circ 
\widehat{\beta}^{-1}_i$ where the $\widehat{\alpha}_i, 
\widehat{\beta}_i,\widehat{\alpha}^{-1}_i, \widehat{\beta}^{-1}_i\colon [0,1]\to \Gamma_i$ 
are lifts of the $\alpha,\beta, \alpha^{- 1},\beta^{-1}$.

Setting $p=\mu(0)$ we pick a small simply-connected neighborhood $V$ of $p$ that satisfies $V\subset U$. Because the $\Delta_i$ are embedded there is a natural way to order by height the components of $\Pi_L^{-1}(V)\cap \Delta_i$.  We denote these ordered components by $\widehat{V}_i(1), \ldots, \widehat{V}_i(n_i)$.  Let $\widehat{p}_i(0)=\widehat{\alpha}_i(0)$, $\widehat{p}_i(1)=\widehat{\alpha}_i(1)=\widehat\beta_i (0)$, $\widehat{p}_i(2)=\widehat{\beta}_i(1)=\widehat \alpha^{-1}_i (0)$, $\widehat{p}_i(3)=\widehat{\alpha}^{-1}_i(1)=\widehat \beta^{-1}_i (0)$, $\widehat{p}_i(4)=\widehat{\beta}^{-1}_i(1)$ and notice that $\widehat{p}_i(j)\in \widehat{V}_i(l(i,j))$ for some function $l$.  Let 
\begin{align*}
m_i[\alpha]&=l(i,1)-l(i,0),\\
m_i[\beta] &=l(i,2)-l(i,1), \\
m_i[\alpha^{-1}]&=l(i,3)-l(i,2),\mbox{ and }\\
m_i[\beta^{-1}] &=l(i,4)-l(i,3), \\
\end{align*}
 represent the (signed) number of sheets between the end points of 
$\widehat{\alpha}_i$, $\widehat{\beta}_i$, $\widehat{\alpha}^{-1}_i$ and 
$\widehat{\beta}^{-1}_i$.  As both $\widehat{\alpha}_i$ and $\widehat{\beta}_i$ are open 
lifts these numbers are never zero. 
We now prove that $m_i[\alpha]=-m_i[\alpha^{-1}]$ and 
$m_i[\beta]=-m_i[{\beta^{-1}}]$ and, hence, $\hat{\mu}_i$ is closed.  
We consider two 
cases: $m_i[\alpha]  m_i[\beta]>0$ and  
$m_i[\alpha] m_i[\beta]<0$. 

In the first case we assume, with out loss of generality, that  
$m_i[\alpha], m_i[\beta]>0$.  Using the fact that the $\Delta_i$ are embedded 
and that $U$ is two-sided, we see that there is a disjoint family of 
``parallel'' lifts of $\alpha$ which we denote by $\widehat{\alpha}_i[j]$.
The first member of the family is 
$\widehat{\alpha}_i[0]=\widehat{\alpha}_i$ and the subsequent members of the 
family are 
the lifts, $\widehat{\alpha}_i[k]$ of $\alpha$ which satisfy 
$\widehat{\alpha}_i[k](0)=l(i,0)+k$. Namely, the lift $\widehat{\alpha}_i[k]$ 
starts 
$k$ sheets above  $\widehat{\alpha}_i(0)$. By the embeddedness of $\Delta_i$ 
and 
the two-sidedness of $U$, the signed number of graphs 
between $ \widehat{\alpha}_i[0](t)$ and $\widehat{\alpha}_i[k](t)$ is 
constant in $t$.  Hence, $\widehat{\alpha}_i[
k](1)=l(i,1)+k$ -- that is, the lifts have endpoint $k$ sheets above the 
endpoint of $\widehat{\alpha}_i$.  Clearly, 
the $\widehat{\alpha}_i[k]$ are well-defined as long as $k\leq  
m_i[\beta]$.  Furthermore,  $\widehat{\alpha}_i[m_i[\beta]]$  has end point 
which is the same as the end point of $\widehat{\beta}_i$.  Hence,  since 
$\widehat{\alpha}_i[m_i[\beta]]^{-1}$ is a lift of $\alpha^{-1}$ starting at $\widehat 
\beta_i(1)$, the lift  $\widehat{\alpha}_i[m_i[\beta]]^{-1}$ must be 
$\widehat{\alpha}_i^{-1}$. That is, $m_i[\alpha]=-m_i[\alpha^{-1}]$.  An 
identical argument shows that $m_i[\beta]=-m_i[\beta^{-1}]$.

In the second case, we may suppose with out loss of generality that 
$m_i[\alpha]>0$ and $ m_i[\beta]<0$.  We first assume that 
$m_i[\alpha]+m_i[\beta]+m_i[\alpha^{-1}]\geq 0$ and obtain a contradiction.  
Under this hypothesis the end point of $\widehat{\alpha}_i^{-1}$ is not below that 
of the initial point of $\widehat{\alpha}_i$. As in the preceding argument, we can 
construct a family of ``parallel'' lifts of $\alpha^{-1}$. The first member of 
this family is $\widehat{\alpha}_i^{-1}$, the second lift in the family starts just 
below $\widehat{\alpha}_i^{-1}$ and the last lift in such family,  $\widehat{\alpha}'_i$ 
has end point that is the initial point of $\widehat{\alpha}_i$.  As before, the 
number of graphs between the start and end points of a lift 
in this family is constant equal to $m_i[\alpha^{-1}]$. Since $\widehat{\alpha}'_i$ 
has end point that is the initial point of $\widehat{\alpha}_i$, the lift 
$(\widehat{\alpha}'_i)^{-1}$ must be $\widehat{\alpha}_i$. This implies that  
$m_i[\alpha] =-m_i[\alpha^{-1}]$. Since  $m_i[\beta]< 0$, this leads to a 
contradiction.  Hence,  $m_i[\alpha]+m_i[\beta]+m_i[
\alpha^{-1}]< 0$.  Again one constructs a family of parallel lifts starting 
from $\widehat{\alpha}_i$ and ending with a curve with initial point the end point 
of $\widehat{\alpha}_i^{-1}$ which again implies that  
$m_i[\alpha]=-m_i[\alpha^{-1}]$. The same argument shows that in this case
$m_i[\beta]=-m_i[\beta^{-1}]$.

Finally, we note that if $\alpha$ and $\beta$ have the embedded lift property, 
then, because they meet at only one point, the curves $\widehat{\alpha}_i\circ 
\widehat{\beta}_i$,  $\widehat{\beta}_i \circ \widehat{\alpha}_i^{-1}$ and 
$\widehat{\alpha}_i^{-1}\circ \widehat{\beta}_i^{-1}$ are all embedded. Hence, the only 
way that $\widehat{\mu}_i$ can fail to be embedded is if one of the the first two is 
closed. 

\end{proof}

\section{Main Proof}
Rather than prove Theorems \ref{BeautifulThm} and \ref{BeautifulThm2} directly 
we prove slightly more general results.  To do so we will  restrict the 
geometry of the three-manifolds $\Omega$ we consider.
\begin{assumption} \label{MainAssump}
Let $\Omega$ be the interior of a complete, oriented, three-manifold with (possibly empty)
boundary $N=\overline{\Omega}$ satisfying the following properties:
\begin{enumerate}
 \item \label{MainAssump1} The boundary of $N$ is mean-convex;
 \item \label{MainAssump2} There is an exhaustion $\set{\Omega_t}_{t\in 
[0,1)}$ of $\Omega$ so that each $\Omega_t$ is pre-compact in $N$, $\partial 
\Omega_t$ 
is mean-convex and $\set{\partial 
\Omega_t}_{t\in(0,1)}$ foliates $\Omega\backslash \overline{\Omega}_0$;
 \item \label{MainAssump3} $\Omega$ contains no closed minimal surfaces.
\end{enumerate}
\end{assumption}
Note that $N$ does not have to be compact. For instance, $\mathbb H^3$ satisfies Assumption~\ref{MainAssump}. 
Minimal surfaces in such an $\Omega$ automatically satisfy a certain type of 
uniform isoperimetric inequality.  This follows immediately from work of B. 
White \cite{WhiteIsoPerim}. 
\begin{lem}\label{IsoperimLem}
 If $\Omega$ satisfies the conditions of 
Assumption~\ref{MainAssump}, $U\subset \Omega$ is a precompact subset of 
$\Omega$ and $\Sigma$ is a compact minimal surface with boundary, 
$\partial\Sigma\subset U$, then 
\begin{enumerate}
 \item There is a pre-compact open set $U'$ depending only on $U$ so that 
$\Sigma 
\subset U'$;
\item There is an increasing function, $\Psi_U:\Real^{\geq 0}\to \Real^{\geq 
0}$, depending only on $U$ and satisfying $\Psi_U(0)=0$ and 
\[
 |\Sigma|\leq \Psi_U(|\partial \Sigma|).
\]
\end{enumerate}
\end{lem}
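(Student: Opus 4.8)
The plan is to prove the two assertions separately: the confinement statement (1) comes from the strong maximum principle applied to the mean-convex foliation $\set{\partial\Omega_t}$, and the isoperimetric bound (2) is an application of White's theorem \cite{WhiteIsoPerim} once (1) is in hand.

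For (1): since $U$ is precompact in $\Omega=\bigcup_t\Omega_t$, there is a $t_0\in(0,1)$ with $\overline U\subset\Omega_{t_0}$, and I claim $\Sigma\subset\Omega_{t_0}$; then $U'=\Omega_{t_0}$ works, being precompact in $N$ by Assumption~\ref{MainAssump}\eqref{MainAssump2} and depending only on $U$. Arguing componentwise, take $\Sigma$ connected; if $\partial\Sigma=\emptyset$ then $\Sigma$ is a closed minimal surface, which Assumption~\ref{MainAssump}\eqref{MainAssump3} forbids, so $\partial\Sigma\neq\emptyset$. Let $t\colon\Omega\to[0,1)$ be the continuous function equal to $0$ on $\overline\Omega_0$ with level set $\set{t=s}=\partial\Omega_s$ for $s\in(0,1)$. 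Since $\Sigma$ is compact, $t|_\Sigma$ attains a maximum $\tau$ at some $x_0\in\Sigma$; as $\max_{\overline U}t<t_0$, if $\tau\geq t_0$ then $x_0$ is an interior point of $\Sigma$, at which $\Sigma$ lies in $\overline\Omega_\tau$ and is tangent to $\partial\Omega_\tau$. Because $\partial\Omega_\tau$ is mean-convex and $\Sigma$ is minimal, the strong maximum principle forces $\Sigma$ to coincide with $\partial\Omega_\tau$ near $x_0$, whence an open--closed argument gives $\Sigma\subset\partial\Omega_\tau$; this contradicts $\emptyset\neq\partial\Sigma\subset U$ together with $U\cap\partial\Omega_\tau=\emptyset$. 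Hence $\tau<t_0$ and $\Sigma\subset\overline\Omega_\tau\subset\Omega_{t_0}$.

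For (2): fix $t_1\in(t_0,1)$, so that $\overline{\Omega}_{t_1}$ is a compact oriented three-manifold with mean-convex boundary which contains no closed minimal surface -- any such surface would, by the maximum principle, lie in $\Omega_{t_1}\subset\Omega$, contrary to Assumption~\ref{MainAssump}\eqref{MainAssump3}. By (1), every compact minimal surface $\Sigma$ with $\partial\Sigma\subset U$ satisfies $\Sigma\subset\Omega_{t_0}\subset\overline\Omega_{t_1}$, so White's isoperimetric theorem \cite{WhiteIsoPerim}, applied to the compact manifold $\overline\Omega_{t_1}$, provides an increasing function $\Psi_U$ with $\Psi_U(0)=0$, depending only on $\overline\Omega_{t_1}$ and hence only on $U$, such that $|\Sigma|\leq\Psi_U(|\partial\Sigma|)$; this is the claimed bound.

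The main obstacle is bookkeeping with conventions rather than any deep difficulty: one must confirm that ``mean-convex'' is the orientation for which $\partial\Omega_\tau$ is a genuine barrier for a minimal surface touching it from the $\overline\Omega_\tau$ side, so that the maximum-principle step in (1) is legitimate, and one must check that the hypotheses of White's theorem are precisely what Assumption~\ref{MainAssump} supplies -- in particular that it is the absence of closed minimal surfaces (not merely an area or stability bound) that yields $\Psi_U$, and that $\Psi_U(0)=0$ is part of the conclusion, encoding that a sequence of minimal surfaces with boundary lengths tending to zero but areas bounded below would subconverge to a closed minimal surface.
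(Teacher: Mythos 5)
Your proof is correct and takes essentially the same approach as the paper: confinement of $\Sigma$ in $\Omega_{t_0}$ via the strong maximum principle applied to the mean-convex foliation $\set{\partial\Omega_t}$, followed by White's isoperimetric theorem \cite{WhiteIsoPerim} on the resulting compact region. The paper applies White's theorem directly to $\overline{U'}=\overline{\Omega}_{t_0}$ rather than enlarging to $\overline{\Omega}_{t_1}$ as you do, but this is an immaterial variant.
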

\begin{proof}
By Assumption \ref{MainAssump}, there is a pre-compact open subset 
$U'=\Omega_{t_0}$ with $U\subset \Omega_{t_0}$.  As $\Omega\backslash 
\Omega_{t_0}$ is foliated by mean-convex subsets, the strong maximum 
principle and the fact that $\partial \Sigma \subset \Omega_{t_0}$ implies that 
$\Sigma\subset \Omega_{t_0}$.  Finally, the existence of the function $\Psi_U$ 
follows immediately from  \cite[Theorem 2.1]{WhiteIsoPerim} applied to 
$\overline{U'}$.
\end{proof}

Under Assumption \ref{MainAssump} we have the following gluing property that 
allows us 
to ``fill in'' curves with the embedded closed-lift property.

\begin{lem} \label{Gluelem}
Suppose that $\Omega$ satisfies Assumption~\ref{MainAssump}.
If $L\subset \Omega$ is an embedded minimal surface with the simple lift 
property and $\gamma:[0,1]\to L$ has the embedded closed lift property, then 
there exists a smooth minimal surface $\Delta$ properly embedded in $\Omega\backslash \gamma$ so that: 
\begin{enumerate}
 \item \label{GL1} $\Delta$ has finite area and is contained in a compact 
subset of 
$\Omega$;
 \item \label{GL3}$\gamma =\overline{\Delta}\backslash 
\Delta$ and $\Delta\cup\gamma$ is connected;
 \item \label{GL4}$\Delta \cap L$ is a non-empty open and 
closed subset of $L\backslash \gamma$; and
 \item \label{GL5}If $\gamma$ is embedded, then $\Delta$ is a disk.
\end{enumerate}
\end{lem}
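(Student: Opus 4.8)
The plan is to produce $\Delta$ as a subsequential limit of the minimal disks that fill the embedded closed lifts of $\gamma$, and then to identify that limit with a region in $L$. Since $\gamma$ has the embedded closed lift property, there are $\delta_i\downarrow 0$, embedded minimal disks $\Delta_i\subset\Omega$, and embedded closed simple $\delta_i$-lifts $\widehat\gamma_i\subset\Delta_i\cap\mathcal N_{\epsilon_i}(U)$ of $\gamma$, where $U$ is a fixed precompact open subset of $L$ with $\gamma\subset U$ and the component $\Gamma_i$ of $\Delta_i\cap\mathcal N_{\epsilon_i}(U)$ containing $\widehat\gamma_i$ is a $\delta_i$-cover of $U$. (The hypothesis already forces $\gamma$ to be two-sided in $L$, since a M\"obius neighborhood of $\gamma$ admits no $\delta$-cover over which $\gamma$ closes up.) As the $\widehat\gamma_i$ are $\delta_i$-parallel to $\gamma$, their lengths are uniformly bounded, $\widehat\gamma_i\to\gamma$ in $C^1$ after reparametrizing, and they all lie in a fixed precompact $W\Subset\Omega$. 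Since $\widehat\gamma_i$ is an embedded circle in the topological disk $\Delta_i$, it bounds a compact embedded minimal sub-disk $D_i\subset\Delta_i$ with $\partial D_i=\widehat\gamma_i$. Applying Lemma~\ref{IsoperimLem} with $W$ in place of $U$ yields a fixed precompact $W'\Subset\Omega$ containing all the $D_i$ and a uniform area bound $|D_i|\le\Psi_W(\sup_i|\widehat\gamma_i|)=:\mathcal A_0<\infty$.

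Thus the $D_i$ are embedded minimal surfaces of uniformly bounded area in the fixed compact set $\overline{W'}$ with $C^1$-convergent boundaries $\widehat\gamma_i\to\gamma$. By the standard compactness theory for such surfaces (smooth subconvergence away from the closed set where curvature concentrates, together with the monotonicity formula and Allard's boundary regularity), a subsequence converges, as varifolds and in the Hausdorff sense, to a limit supported on a compact set $\overline\Delta\subseteq\overline{W'}$ with $\gamma\subseteq\overline\Delta$ and mass at most $\mathcal A_0$; by monotonicity this Hausdorff limit coincides with the support of the varifold limit, and the $D_i$ being connected it is connected. The key local input is that near $\widehat\gamma_i$ the disk $D_i$ coincides with $\Gamma_i$, a $\delta_i$-graph over $U$; hence along $\gamma$ the convergence is smooth and of multiplicity one and the limit is a one-sided collar of $\gamma$ in $L$. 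So $\overline\Delta$ is a smooth manifold-with-boundary near $\gamma$, with $\partial=\gamma$ and tangent to $L$ there.

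Now I identify the limit. Unique continuation for the minimal surface equation applied to this collar shows that the regular part $\Delta$ of $\overline\Delta$ satisfies $\Delta\subset L$: the set of points of $\Delta$ near which $\Delta$ and $L$ coincide is open, nonempty, and relatively closed in $\Delta$. Since $\Delta$ has finite area, the monotonicity formula prevents $\Delta$ from accumulating at any point of $\Omega\setminus\gamma$; as $\Delta$ has no boundary in $\Omega\setminus\gamma$ apart from $\gamma$, we get $\overline\Delta\setminus\Delta=\gamma$, so $\Delta$ is properly embedded in $\Omega\setminus\gamma$ and (being a submanifold of $L$) is open and closed in $L\setminus\gamma$. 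A component of $\Delta$ whose closure missed $\gamma$ would be a closed minimal surface in $\Omega$, contradicting Assumption~\ref{MainAssump}\eqref{MainAssump3}; hence every component abuts $\gamma$ and, $\overline\Delta$ being the connected Hausdorff limit of the $D_i$, $\Delta\cup\gamma$ is connected and $\Delta$ is nonempty. This gives conditions~\ref{GL1}--\ref{GL4}. (When $\gamma$ is null-homotopic in some precompact $U\subset L$ one can bypass the limit: then $\widehat\gamma_i$ bounds inside $\Gamma_i$, so $D_i\subset\Gamma_i$ is a finite covering of the disk region $\Pi_L(D_i)\subset U$, hence is diffeomorphic to it, and one may take $\Delta=\Pi_L(D_i)$.)

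For condition~\ref{GL5}: when $\gamma$ is embedded, $\overline\Delta\subset L$ is a compact surface whose only boundary curve is the circle $\gamma$, and I would show it is a disk by upgrading $D_i\to\Delta$ to multiplicity one. Embeddedness of the $D_i$ together with the area bound should rule out interior curvature concentration of the degenerate-multigraph type, so that $D_i\to\Delta$ smoothly with multiplicity one; then for large $i$, $D_i$ is a normal graph over $\overline\Delta$, whence $\overline\Delta$ is diffeomorphic to the disk $D_i$. I expect this multiplicity-one statement---and, more broadly, the extraction in the second step of a \emph{smooth} limit $\Delta\subset L$ with $\overline\Delta\setminus\Delta$ equal to $\gamma$---to be the main obstacle; it is exactly here that all three parts of Assumption~\ref{MainAssump} enter, namely the mean-convex foliation to confine the $D_i$ to a fixed compact set, White's isoperimetric inequality to bound their areas, and the absence of closed minimal surfaces to prevent the limit from shedding spurious closed components.
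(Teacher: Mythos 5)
Your argument for Items~\eqref{GL1}--\eqref{GL4} follows essentially the same route as the paper: fill the embedded closed lifts $\widehat\gamma_i$ with sub-disks $D_i\subset\Delta_i$, confine them via Lemma~\ref{IsoperimLem}, extract a limit using curvature and area bounds, and identify $\Delta$ as a relatively open and closed subset of $L\setminus\gamma$ by unique continuation, with Assumption~\ref{MainAssump}\eqref{MainAssump3} killing any component whose closure misses $\gamma$. (The paper routes the interior regularity through the Schoen--Simon estimate and Theorem~\ref{SmoothCompactThm} rather than your appeal to Allard boundary regularity, but this is a cosmetic difference.) Your parenthetical observation that the hypothesis already forces $\gamma$ to be two-sided is correct and worth recording.

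The genuine gap is in Item~\eqref{GL5}, and you have correctly located it but not filled it. Your proposed mechanism --- ``embeddedness of the $D_i$ together with the area bound should rule out interior curvature concentration of the degenerate-multigraph type'' --- does not address the actual difficulty. Once you have the area bound, Schoen--Simon already gives uniform interior curvature bounds on compact sets away from $\gamma$, so curvature does \emph{not} concentrate; the issue is purely that the smooth, curvature-controlled convergence $D_i\to\Delta$ could have locally constant multiplicity $m\geq 2$, which bounded curvature alone cannot exclude. The paper handles this differently: it passes to the integer-multiplicity rectifiable varifolds $V_n\to V$ with $\spt V=\overline\Delta\subset L$ and first variation supported on $\gamma$, then (i) invokes the constancy theorem to make the multiplicity constant on each component of $\spt V\setminus\gamma$; (ii) picks $p\in\gamma$ and a small two-sided neighborhood $W=W_-\cup\gamma\cup W_+$ in $L$, and argues that if $\spt V$ met \emph{both} $W_\pm$ then strong unique continuation would produce a closed minimal surface in $\Omega$, contradicting Assumption~\ref{MainAssump}\eqref{MainAssump3}, so $\spt V$ is one-sided at $p$; and (iii) uses that $\gamma_n\to\gamma$ with multiplicity one to pin the multiplicity on the nonempty side to one near $\gamma$, whence it is one globally by constancy, and then for large $i$ the disk $D_i$ is a graph over $\Delta$. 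Steps (i) and (ii) --- the constancy theorem and the one-sidedness argument from the no-closed-minimal-surface hypothesis --- are exactly the ingredients your sketch is missing; you are right that Assumption~\ref{MainAssump} must be invoked at this point, but ``ruling out curvature concentration'' is not the role it plays.
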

\begin{proof}
 By hypothesis, there exists a sequence of closed embedded simple $\frac 
1n$-lifts  $\gamma_n$ of $\gamma$.  Hence, each $\gamma_n$ bounds a minimal disk 
$\Delta_n'$ inside of the minimal disks $\Delta_n$.  Clearly, there is a 
fixed pre-compact subset $U$ of $\Omega$ containing $\gamma$ together with all 
of the $\gamma_n$.  Furthermore, the length of each $\gamma_n$ is 
bounded by twice the length of $\gamma$ and so there is a uniform bound on 
$|\partial \Delta_n'|$. As a consequence, Lemma \ref{IsoperimLem} implies 
that there is a precompact subset $U'$ of $\Omega$ so that the sequence of 
disks $\Delta_n'$ are contained in $U'$ and, moreover, have uniformly bounded 
area.  A result of Schoen and Simon  \cite{Schoen1983a} -- see Theorem  
\ref{SScurvEstThm} for the statement -- then gives uniform curvature bounds for 
the 
$\Delta_n'$ on compact subsets of $\Omega\backslash 
\gamma_n$.  Hence, up to passing to a subsequence, 
Theorem~\ref{SmoothCompactThm} and the area bounds imply that the 
$\Delta_n'$ converge in $C^\infty_{loc}({\Omega}\backslash \gamma)$ to a 
properly embedded minimal surface $\Delta \subset \Omega\backslash \gamma$ of 
finite area and bounded curvature on compact 
sets of ${\Omega}\backslash \gamma$. As each $\Delta_n'$ is contained in $U'$, 
it follows that $\Delta\subset \overline{U}'$.  This proves Items \eqref{GL1}.  
Item \eqref{GL3} follows from the set theoretic convergence of 
$\bar{\Delta}_n'$.

As the $\Delta_n$ are contained in $\frac 1n$ graphs over a fixed neighborhood 
$V$ of $\gamma$,  $\Delta\backslash \gamma$ contains a component of 
$V\backslash \gamma$ and so $\Delta \backslash 
\gamma $ is non-empty and is contained in $L$. Indeed, the nature of the 
convergence implies that $\Delta \backslash 
\gamma $ is an open and closed  subset of $L\backslash \gamma$. 

If $\gamma$ is embedded, then the $\widehat{\gamma}_n$ converge to $\gamma$ 
with multiplicity one.  Clearly, to prove Item \eqref{GL5} it  
suffices to show that the $\Delta_n'$ also converge to $\Delta$ with 
multiplicity one. This is most conveniently done using the language of 
varifolds -- we refer to  \cite{si1} for details.

Let $V_{n}$ be the integer multiplicity rectifiable varifold associated to 
$\overline{\Delta}_n'$.  As $\gamma$ is piecewise smooth, the nature of the 
convergence of $\gamma_n$ toward $\gamma$ implies that $\overline{\Delta}_n'$ has 
uniformly bounded first variation. Indeed, since $\Delta_n$ is minimal, the 
first variation measure of $V_n$ is supported along $\gamma_n$.  As the $V_n$ 
also have uniformly bounded mass, we may apply the compactness theory for 
integer multiplicity rectifiable varifolds to see that (up to passing to a 
further subsequence) $V_n$ converges in the sense of varifolds (in $\Omega)$ to 
an integer multiplicity rectifiable varifold $V$.  It follows from the nature 
of the convergence that $\spt V=\overline{\Delta}$ and, moreover, at any point of 
$\Delta$ the multiplicity of $V$ is a positive integer.  Moreover, the first 
variation measure of $V$ is supported along $\gamma$.  Notice that as $L$ is a 
smooth minimal surface and $\spt V\subset L$, the constancy theorem implies 
that the multiplicity of each component of $\spt V\backslash \gamma$ is 
constant. 

Fix a point $p\in \gamma$ and a small open neighborhood $W\subset L$ about 
$p$ choosen small enough so that $\gamma$ divides $W$ into exactly two 
components $W_-$ and $W_+$.  If both $W_-$ and $W_+$ meet $\spt V$, then it 
follows from the strong unique continuation princple for smooth minimal 
surfaces that $\spt V$ is a closed minimal surface in $\Omega$.  This violates 
Assumption \ref{MainAssump} and so -- up to relabelling -- we may assume that 
$W_-\cap \spt V=\emptyset$.  As $\gamma_n$ converge to $\gamma$ with 
multiplicity one, the nature of the convergence of $\Delta_n'$ to $\Delta$ then 
immediately implies that the multiplicity of $V$ is one which proves 
the claim.
\end{proof}
\begin{cor}\label{separating}
Suppose $\Omega$ satisfies  Assumption~\ref{MainAssump}.
If $L\subset \Omega$ is an embedded minimal surface with the simple lift 
property and $\gamma$ is a closed embedded curve in $L$ with the 
closed lift property, then $\gamma$ is separating. 
\end{cor}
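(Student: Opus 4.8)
The plan is to argue by contradiction: suppose $\gamma$ is a closed embedded curve in $L$ with the closed lift property that is \emph{non}-separating. By Lemma~\ref{Gluelem}, since an embedded curve automatically has the embedded closed lift property, there exists a minimal surface $\Delta$ properly embedded in $\Omega\backslash\gamma$, with finite area, contained in a compact set, such that $\gamma=\overline\Delta\backslash\Delta$, $\Delta\cup\gamma$ is connected, $\Delta$ is a non-empty open and closed subset of $L\backslash\gamma$, and $\Delta$ is a disk (Item~\eqref{GL5}). The key tension to exploit is that $\Delta$ is simultaneously a disk whose frontier is the single closed curve $\gamma$, and an open-and-closed subset of $L\backslash\gamma$.

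First I would use the fact that $\gamma$ is non-separating in $L$ to show that $L\backslash\gamma$ is connected; hence, since $\Delta$ is a non-empty open and closed subset of $L\backslash\gamma$, we must have $\Delta=L\backslash\gamma$. Therefore $\overline\Delta=L\cup\gamma$ (closure in $\Omega$), but $\overline\Delta=\Delta\cup\gamma=L$, so in particular $L=\overline{\Delta}$ is compact (being contained in a compact subset of $\Omega$ and closed in it) and $L$ is a properly embedded minimal surface in $\Omega$ without boundary. A compact minimal surface without boundary is a closed minimal surface, contradicting Assumption~\ref{MainAssump}\eqref{MainAssump3}. Thus $\gamma$ must be separating.

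The step I expect to require the most care is verifying that $L\backslash\gamma$ is connected when $\gamma$ is non-separating, keeping in mind that $L$ need not be compact and need not be orientable — but this is exactly the standard topological characterization of "separating": a closed embedded curve $\gamma$ in a surface $L$ is called non-separating precisely when $L\backslash\gamma$ is connected, so this is immediate from the definition once we fix terminology. A secondary subtlety is confirming that $\Delta=L\backslash\gamma$ forces $\overline\Delta$, taken in $\Omega$, to equal $L$: one needs that $L$ is already closed in $\Omega$, or rather that $\overline\Delta\subset L\cup\gamma$ and $\overline\Delta\supset \Delta\cup\gamma=L\backslash\gamma\,\cup\,\gamma$, giving $\overline\Delta=L$; since $\overline\Delta$ is compact by Lemma~\ref{Gluelem}\eqref{GL1}, $L$ is a compact surface without boundary, i.e., a closed minimal surface, which is the desired contradiction with Assumption~\ref{MainAssump}.
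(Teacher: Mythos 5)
Your proof is correct and follows essentially the same route as the paper: apply Lemma~\ref{Gluelem} (noting, as you do, that an embedded curve with the closed lift property automatically has the embedded closed lift property), use connectedness of $L\backslash\gamma$ together with item~\eqref{GL4} to force $\Delta=L\backslash\gamma$, and conclude via items~\eqref{GL1} and~\eqref{GL3} that $\overline{\Delta}=L$ is a compact boundaryless minimal surface, contradicting Assumption~\ref{MainAssump}\eqref{MainAssump3}. The paper additionally invokes strong unique continuation at the last step, but your more direct derivation of compactness from $\overline{\Delta}=\Delta\cup\gamma=L$ and the containment in a compact set already suffices, so this is a minor stylistic difference rather than a gap.
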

\begin{proof}
Let $\Delta$ be the surface given by Lemma \ref{Gluelem}.  If $\gamma$ is 
non-separating, then $L\backslash \gamma$ is connected and so, by the previous 
lemma, $L\backslash \gamma\subset \Delta$.  Therefore, by strong unique 
continuation for smooth minimal surfaces, $\overline{\Delta}=\Delta\cup 
\gamma=\Delta\cup L$ is  a closed minimal surfaces of finite area in  
$\Omega$   which contradicts Assumption \ref{MainAssump}.
\end{proof}

We are now in a position to prove that surfaces with the simple lift property in 
regions satisfying Assumption \ref{MainAssump} must have genus zero. 
%We note that Condition \eqref{MainAssump3} is not used in the proof. 
\begin{prop} \label{GenusZeroProp}
Suppose $\Omega$ satisfies  Assumption~\ref{MainAssump}. If $L\subset \Omega$ is an embedded minimal surface with the simple lift property in $\Omega$, then $L$ has genus zero.
\end{prop}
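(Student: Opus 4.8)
The plan is to argue by contradiction: suppose $L$ has positive genus. Then I can find a pair of closed piecewise-$C^1$ curves $\alpha,\beta$ in $L$ whose homology classes have intersection number one -- a symplectic pair for the intersection form on $H_1(L)$ -- which I may take to meet at the single point $p_0=\alpha(0)=\beta(0)$ and to lie in a common two-sided pre-compact open subset $U\subset L$ (here I use that $L$ is two-sided, which follows from $\Omega$ being oriented and, for the relevant leaves, orientability; if $L$ itself is one-sided I pass to a suitable two-sided piece containing the genus). The key dichotomy is whether $\alpha$ and $\beta$ have the open lift property or the closed lift property.

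The main structural input is Proposition \ref{CommutatorClosedLiftProp} together with Corollary \ref{separating}. First I would dispose of the case where one of $\alpha$, $\beta$ -- say $\alpha$ -- has the \emph{closed} lift property. Replacing $\alpha$ by a simple closed curve in the same homology class (genus lets me choose $\alpha$ embedded), Corollary \ref{separating} forces $\alpha$ to be separating; but a curve with intersection number one with $\beta$ cannot be separating, a contradiction. Hence both $\alpha$ and $\beta$ have the \emph{open} lift property. Now apply Proposition \ref{CommutatorClosedLiftProp}: choosing $\alpha,\beta$ embedded (so they automatically have the embedded lift property), the commutator $\mu=\alpha\circ\beta\circ\alpha^{-1}\circ\beta^{-1}$, or else one of $\alpha\circ\beta$ or $\beta\circ\alpha^{-1}$, has the embedded closed lift property. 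In each of the three cases Lemma \ref{Gluelem} produces a properly embedded minimal surface $\Delta$ of finite area in $\Omega\backslash\gamma$ with $\gamma=\overline{\Delta}\backslash\Delta$, $\Delta\cup\gamma$ connected, and $\Delta$ open-and-closed in $L\backslash\gamma$; and since the relevant $\gamma$ can be arranged to be embedded, $\Delta$ is in fact a disk.

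To finish I would extract the contradiction from the topology. In the commutator case, $\mu$ is null-homotopic in $L$ but the disk $\Delta$ it bounds sits inside $L$ (being open-and-closed in $L\backslash\mu$); capping $\mu$ off by $\Delta$ shows $\mu$ bounds in $L$ in a way compatible with the handle, and comparing with the fact that $[\alpha]\cdot[\beta]=1$ one sees $L\backslash\mu$ would have to be connected, so $L\backslash\mu\subset\Delta$, whence $\overline{\Delta}=\Delta\cup\mu\supset L$ would be a closed minimal surface of finite area, contradicting Assumption \ref{MainAssump} exactly as in Corollary \ref{separating}. In the cases $\gamma=\alpha\circ\beta$ or $\gamma=\beta\circ\alpha^{-1}$, the curve $\gamma$ again has intersection number $\pm1$ with one of $\alpha,\beta$ hence is non-separating in $L$, so Corollary \ref{separating} (or the same $L\backslash\gamma$ connected argument) yields the same contradiction. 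Therefore $L$ has genus zero.

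\textbf{Main obstacle.} The delicate point is the bookkeeping that converts ``positive genus'' into a concrete embedded pair $\alpha,\beta$ with intersection number one that meets at a single point and lies in a two-sided pre-compact $U$, and then reading off from the three possible output curves of Proposition \ref{CommutatorClosedLiftProp} that each one is either non-separating or bounds a sub-disk of $L$ forcing $L$ itself to be a closed minimal surface. The homological/intersection-number argument is elementary but must be done carefully to cover all three cases uniformly and to handle the one-sided possibility; the geometric heavy lifting (area bounds, Schoen--Simon curvature estimates, varifold multiplicity-one, ruling out closed minimal surfaces) is entirely packaged in Lemma \ref{Gluelem} and Corollary \ref{separating}.
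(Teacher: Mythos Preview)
Your approach is essentially the same as the paper's: pick a handle pair $\alpha,\beta$, use Corollary~\ref{separating} to see both have the open lift property, apply Proposition~\ref{CommutatorClosedLiftProp}, then use Lemma~\ref{Gluelem} and connectedness of the complement to produce a closed minimal surface contradicting Assumption~\ref{MainAssump}. A few corrections and simplifications are in order, however.

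First, the claim that ``the relevant $\gamma$ can be arranged to be embedded, so $\Delta$ is in fact a disk'' is wrong and unnecessary: none of $\mu$, $\alpha\circ\beta$, $\beta\circ\alpha^{-1}$ is an embedded curve (each passes through $p_0$ at least twice), so Item~\eqref{GL5} of Lemma~\ref{Gluelem} does not apply. Fortunately you never use that $\Delta$ is a disk; only Items~\eqref{GL1}--\eqref{GL4} are needed. Second, for the same reason you cannot invoke Corollary~\ref{separating} on $\alpha\circ\beta$ or $\beta\circ\alpha^{-1}$, since that corollary is stated only for embedded closed curves.

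The paper avoids your case split entirely by observing that the \emph{image} of each of the three candidate curves is exactly $\alpha\cup\beta$, and the hypotheses on the handle pair give directly that $L\setminus(\alpha\cup\beta)$ is connected. Hence in every case $L\setminus\gamma$ is connected, Lemma~\ref{Gluelem} forces $\Delta=L\setminus\gamma$, and unique continuation makes $\overline{\Delta}=L\cup\gamma$ a closed minimal surface of finite area. Your intersection-number bookkeeping in the individual cases is correct in spirit but roundabout; the uniform ``same image, connected complement'' observation is cleaner.
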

\begin{proof}
 Arguing by contradiction, suppose that $L$ has genus greater than zero. Then, by the classification of surfaces, there exist two piece-wise smooth, non-separating Jordan curves $\alpha,\beta: [0,1]\to L$ and a two-sided pre-compact set $U\subset L$ such that the following holds:
 \begin{itemize}
\item $\alpha\cap \beta= p=\alpha(0)=\beta(0)=\alpha(1)=\beta(1)$;
\item $L\backslash (\alpha\cup \beta)$ is connected;
\item $\alpha\cup\beta\subset U$.
\end{itemize}   

By Lemma \ref{separating} both curves have the open lift property and so   
$\alpha$ and $\beta$ satisfy  the hypothesis of Proposition 
\ref{CommutatorClosedLiftProp}.   Hence, there exists a sequence of closed 
curves $\nu_n$ that are closed simple $\frac 1n$-lifts of  $\nu 
=\alpha\circ\beta\circ\alpha^{-1}\circ\beta^{-1}$.  Proposition 
\ref{CommutatorClosedLiftProp} further tells us that either the curve 
$\mu=\nu$,  
the curve $\mu=\alpha\circ\beta$  or  the the curve $\mu=\beta\circ \alpha^{-1}$ 
has the embedded closed lift property.  In all cases, Lemma \ref{Gluelem} 
gives a  minimal surface $\Delta$ properly embedded  in ${\Omega}\backslash 
\mu$ and that $[\Delta\backslash \mu]\cap L$ is a non-empty, open and closed 
subset of $L\backslash \mu$. As $L\backslash \mu$ is connected, 
$\Delta=L\backslash \mu$.   Hence, by the strong unique continuation property 
of 
smooth minimal surfaces, $\overline{\Delta}=\Delta\cup L$ is a properly 
embedded minimal surface 
in $ \Omega$ of finite area and we contradict Assumption \ref{MainAssump}.
 \end{proof}

Clearly, Theorem \ref{BeautifulThm2} follows from
Proposition \ref{simpleliftprop}.  Indeed, the region 
$\Omega$ of  Theorem  \ref{BeautifulThm2} can be seen to satisfy Assumption 
\ref{MainAssump} by taking the exhaustion to be $\Omega$ itself.

\section{Minimal Disk Sequences}

In this section we conclude the proof of Theorem~\ref{BeautifulThm}. 
We first show that the leaves of the limit lamination do not have many holes.
 \begin{prop}\label{NoPantsProp}
 	If $(\Omega, K, \mathcal{L}, \mathcal{S})$ is a minimal disk sequence  and $L$ 
 	is a  leaf of $\LL$, then
 	\begin{enumerate}
 		\item  If $L$ is two-sided, then $L$ is either a disk or an annulus;
 		\item If $L$ is one-sided, then $L$ is a M\"obius band.
 	\end{enumerate}
 \end{prop}
 \begin{proof}
 	We will argue by contradiction. 
 	For any three separating Jordan curves $\gamma_1$,  $\gamma_2$ and $\gamma_3$ in 
 	$L$  with the property that no one of the curves separates the other two  it 
 	is the case that $L\backslash \left(\gamma_1\cup \gamma_2\cup 
 	\gamma_3 \right)$ has four components $L_1,L_2,L_3, L_4$. Label the $L_i$ so  
 	that $\overline{L}_1\cap (\gamma_2\cup \gamma_3)=\emptyset$, 
 	$\overline{L}_2\cap(\gamma_1\cup \gamma_3)=\emptyset$, $\overline{L}_3\cap (\gamma_1\cup 
 	\gamma_2)=\emptyset$ and   $\gamma_1\cup \gamma_2 \cup \gamma_3\subset 
 	\overline{L}_4$.    By Proposition~\ref{GenusZeroProp}, 
 	$L$ has genus zero. Hence, if 
 	$L$ is  neither a disk, an annulus 
 	nor a M\"obius band, then the classification of surfaces implies that the 
 	$\gamma_i$ may be choosen so that 
 	\begin{itemize}
 		\item $L_1,$ $L_2$ and $L_3$ are not disks;
 		\item $L_4$ is two-sided.
 	\end{itemize}
 	[THIS PART NEEDS TO BE CHANGED]
 	We claim that for such a choice, $\gamma_1, \gamma_2$ and $\gamma_3$ have the 
 	open lift and embedded lift property. Indeed, being embedded curves, they 
 	clearly have the embedded lift property. Suppose $\gamma_i$ had the closed lift 
 	property, then  applying Lemma \ref{Gluelem} would give that $\gamma_i$ is the 
 	boundary of a disk $\Delta\subset L$ contradicting our choice of $\gamma_i$. 
 	[!!!]
 	
 	Let $\sigma$ be an embedded arc  in $L_4$ which 
 	connects $\gamma_1(0)$ to $\gamma_2(0)$.  
 	Notice that the classification of surfaces tells us that such $\sigma$ exists 
 	and does not separate $L_4$.  Consider the new closed curve 
 	$\gamma_4=\sigma^{-1}\circ \gamma_2\circ \sigma$. By an argument analogous to 
 	the one described before, this curve must also have the open lift property. In 
 	fact, the embeddedness of $\sigma$ and of $\gamma_2$ and the fact that 
 	$\gamma_2$ has the open-lift property imply that $\gamma_4$ has the 
 	embedded-lift property.

 	We now consider the closed curve $\nu=\gamma_1\circ \gamma_4 \circ 
 	\gamma_1^{-1} \circ \gamma_4^{-1}$.  Proposition \ref{CommutatorClosedLiftProp} 
 	implies that either  the curve $\mu=\nu$, the curve $\mu=\gamma_1\circ\gamma_4$ 
 	does or  the curve $\mu=\gamma_4\circ 
 	\gamma_1^{-1}$ has the embedded closed lift property.  
 	In all cases, let $\Delta$ be the embedded minimal surface given by Lemma 
 	\ref{Gluelem}. The fact that $\Delta$ is connected and that
 	$\gamma_1\cup\gamma_2\subset \overline \Delta$ together imply that $L_4\subset 
 	\Delta$. However, as $\gamma_3$ is a Jordan curve disjoint from $\nu$, $\gamma_3$ 
 	must be the limit of embedded closed curves in $\Delta_n$ -- that is, it has 
 	the embedded closed lift property. This is contradiction and proves the 
 	Proposition.
 \end{proof}

We next show that in the case of minimal disk sequences, the leaves of 
the limit lamination are two-sided. 
\begin{prop}\label{LamSeqNoTwoSidedProp}
Let $\Omega$ satisfy Assumption \ref{MainAssump}.
If $(\Omega, K, \mathcal{L}, \mathcal{S})$ is a minimal disk sequence  and $L$ 
is a  leaf of $\LL$, then $L$ is two-sided.
\end{prop}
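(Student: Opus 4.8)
The plan is to argue by contradiction. Suppose $L$ is one-sided. Since $\Omega$ satisfies Assumption~\ref{MainAssump} and $L$, being a leaf of the limit lamination $\mathcal L$ of a minimal disk sequence, has the simple lift property by Proposition~\ref{simpleliftprop}, Proposition~\ref{NoPantsProp} forces $L$ to be a M\"obius band. Fix an embedded closed curve $\gamma$ that is the core of $L$ (so $\gamma$ generates $\pi_1(L)\cong\mathbb Z$ and is one-sided in $L$) and a pre-compact M\"obius-band neighborhood $U\subset L$ of $\gamma$ whose closure in $\Omega$ is compact and disjoint from $K$; this is possible since $\gamma$ is compact and contained in $\reg(\mathcal S)=\Omega\backslash K$. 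Recall that $L\backslash\gamma$ is a connected annulus (removing the core circle from a M\"obius band). I will show that the doubled curve $\gamma^2$ (i.e.\ $\gamma$ traversed twice) has the embedded closed lift property and then contradict Lemma~\ref{Gluelem}.

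The first observation is that $\gamma$ itself has the open lift property: if not, then $\gamma$ has the closed lift property and, being embedded, the embedded closed lift property, so Lemma~\ref{Gluelem} produces a minimal surface $\Delta$ that is a non-empty open and closed subset of $L\backslash\gamma$ and, since $\gamma$ is embedded, is a disk --- impossible, as $L\backslash\gamma$ is a connected annulus. Next, apply the simple lift property of $L$ to $\gamma$ and $U$ with $\delta=\tfrac1n$: this produces $\epsilon_n>0$, an embedded minimal disk $\Delta_n$ --- in the disk sequence case $\Delta_n$ is one of the surfaces $\Sigma_i$ for $i$ large --- and a simple $\tfrac1n$-lift $\widehat\gamma_n$ of $\gamma$ lying in a component $\Gamma_n$ of $\Delta_n\cap\mathcal N_{\epsilon_n}(U)$ that is a $\tfrac1n$-cover of $U$. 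Since $\widehat\gamma_n$ lies over $\gamma$, it is contained in $\Pi_U^{-1}(\gamma)\cap\mathcal N_{\epsilon_n}(U)$, which is a M\"obius band $M$ with core $\gamma$ --- here one uses that $\gamma$ is one-sided in $U$, so the normal bundle of $U$ restricted to $\gamma$ is the non-trivial line bundle.

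The crucial point is that the component $C_n$ of $\Delta_n\cap M$ containing $\widehat\gamma_n$ is a compact embedded circle in $M$. This is where the proper embeddedness of the $\Sigma_i$ enters decisively: $M$ has compact closure inside $\Omega_i$, and since $\Sigma_i$ is properly embedded its intersection with this compact set is compact, so $C_n$ cannot spiral off and must close up. (For a minimal disk \emph{closure}, the leaves of $\mathcal L_0$ need not be properly embedded and the analogous intersection with $M$ can be a non-compact spiral; this is why Theorem~\ref{BeautifulThm2} cannot exclude one-sided leaves.) An embedded circle in a M\"obius band covers the core with degree $0$, $1$, or $2$, and $C_n$ surjects onto $\gamma$ because it contains $\widehat\gamma_n$; moreover the degree is not $1$, since $\widehat\gamma_n$ --- a sub-arc of $C_n$ projecting once around $\gamma$ --- is not closed by the open lift property. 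Hence $C_n$ covers $\gamma$ with degree $2$, i.e.\ $C_n$ is a closed embedded simple $\tfrac1n$-lift of $\gamma^2$. Letting $n\to\infty$ shows $\gamma^2$ has the embedded closed lift property.

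Now apply Lemma~\ref{Gluelem} to $\gamma^2$: it yields a minimal surface $\Delta$, properly embedded in $\Omega\backslash\gamma$, of finite area and contained in a compact subset of $\Omega$, with $\overline\Delta\backslash\Delta=\gamma$ and with $\Delta$ a non-empty open and closed subset of $L\backslash\gamma$. Since $L\backslash\gamma$ is connected, $\Delta=L\backslash\gamma$, so $\overline\Delta=\Delta\cup\gamma=L$; but $\overline\Delta$ is compact, whereas $L$ is an open M\"obius band and hence non-compact --- a contradiction. The main obstacle in this argument is the crux in the previous paragraph, namely promoting the abstract $\tfrac1n$-lift $\widehat\gamma_n$ to an honest compact embedded circle $C_n$ in the M\"obius band $M$; it is precisely here that one must use that the approximating surfaces $\Sigma_i$ are properly embedded, i.e.\ that one is dealing with a minimal disk sequence rather than a minimal disk closure.
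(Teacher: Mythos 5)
Your proposal has a genuine gap at exactly the step you flag as ``the crucial point.''  You assert that the component $C_n$ of $\Delta_n\cap M$ containing $\widehat\gamma_n$ is a \emph{compact embedded circle} in $M$, arguing that proper embeddedness of $\Sigma_i$ forces $C_n$ to close up because it cannot spiral.  But properness only gives that $\Sigma_i\cap M$ is compact.  Since $M=\Pi_L^{-1}(\gamma)\cap\overline{\mathcal N}_\epsilon(U)$ is a compact M\"obius band \emph{with nonempty boundary} $\partial M$, a compact $1$--manifold inside $M$ may consist of circles \emph{or of arcs with endpoints on $\partial M$}.  Ruling out non-compact spirals (which is what properness actually buys) does not rule out such a compact arc.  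In fact the paper's proof takes the opposite view: it asserts that the relevant component $\widehat\gamma_i$ is ``proper, but not closed, in $M$'' --- i.e.\ precisely an arc --- and devotes the bulk of the argument to handling that case, via monotonicity of $\widehat\gamma_i$, the observation that it must wind around $\gamma$ at least three times for $i$ large, and then two applications of the intermediate value theorem in the oriented double cover $\widetilde M\cong\mathbb S^1\times[-1,1]$ (the functions $g_i(\theta)=v_i(\theta+2\pi)-v_i(\theta)$ and $f_i(\theta)=v_i(\theta+\pi)+v_i(\theta)$).  Your proposal simply skips this case.

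To be clear about what is and is not salvageable: your treatment of the circle case is pleasant --- if $C_n$ really were an embedded circle in $M$ passing through a lift of $\gamma$, the open-lift property of $\gamma$ would force its winding number to be $2$, making it an embedded closed lift of $\gamma^2$, and Lemma~\ref{Gluelem} combined with connectedness of $L\backslash\gamma$ would then give a compact closed minimal surface, contradicting Assumption~\ref{MainAssump}.  And your parenthetical remark correctly identifies \emph{why} the argument must use the disk sequence rather than the disk closure: the analogous intersection for a non-properly-embedded leaf of $\mathcal L_0$ can indeed be a non-compact spiral accumulating on $\gamma$, which is why $\mathbb Z$--valued winding-number reasoning breaks down there.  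But the dichotomy you set up (``closed circle'' vs.\ ``non-compact spiral'') omits the third and, according to the paper, generic possibility: a \emph{compact} arc that enters and exits $M$ through $\partial M$ after winding several times.  Until that case is treated --- as the paper does with its intermediate-value-theorem argument, or by some other means --- the proof is incomplete.
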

\begin{proof} 
 Suppose that $L$ is a one-sided leaf of $\mathcal{L}$.  By Proposition
\ref{NoPantsProp}, $L$ is a M\"{o}bius band. As a consequence, there is a 
closed Jordan curve $\gamma:[0,1]\to L$ that is non-separating and so, by 
Lemma \ref{separating}, has the open lift property. Let $U$ be an 
open pre-compact neighborhood of $\gamma$ and pick $\epsilon>0$ so that 
$\overline{\mathcal{N}}_\epsilon(U)$ is a regular neighborhood. As $\gamma$ is 
non-separating, $U$ is one-sided and, indeed, the surface 
$M=\Pi_L^{-1}(\gamma)\cap \overline{\mathcal{N}}_\epsilon (U)$ is a 
closed M\"obius band.

 Let $\Sigma_i$ be the surfaces in $\mathcal{S}$. There are curves, 
$\widehat{\gamma}_i$, which are components of  $\Sigma_i\cap M$  
containing $\delta$-lifts of $\gamma$ for any $\delta$ sufficiently small. In 
particular, the $\widehat{\gamma}_i$ are proper, but not closed,  in $M$.   
Furthermore, after possibly shrinking $\epsilon$, they are monotone in the 
sense that $(\widehat{\gamma}_i')^\top \neq 0$ and $\widehat{\gamma}_i$ meets 
$\partial M$ transversely. Finally, for $i$ large enough, 
the map $\Pi_L:\widehat{\gamma}_i\to \gamma$ contains a three-fold cover. 
We 
claim this yields a contradiction. 

To see this consider $\pi: 
\widetilde{M}\to M$ the oriented double cover of ${M}$.  
As $\wt{M}$ is an annulus and $\widehat{\gamma}_i$ is monotone: 
 \begin{itemize}
 \item $\wt M= \mathbb S^1\times [-1,1]$ with coordinates $(\theta,z)$;
 \item $M= \wt M \slash_\sim$ with  $(\theta, z)\sim (\theta+\pi, -z)$;
 \item $\mathbb S^1\times \{0\}=\pi^{-1}(\gamma)$;
 \item $\wt{\gamma}_i=\pi^{-1}(\widehat{\gamma}_i)$ is a graph over $\mathbb 
S^1$.
 \end{itemize}
As $\wt{\gamma}_i$ is a graph, we may parametrize
$\wt{\gamma}_i(\theta)$ as $(\theta,v_i(\theta))$ for $\theta\in[0, T_i]$ and 
some continuous function $v_i$ with $|v_i(0)|=|v_i(T_i)|=1$ and 
$|v_i(\theta)|<1$ for $\theta\in (0,T_i)$. Since $\widehat{\gamma}_i$ contains 
a three-fold cover of $\gamma$, $T_i>3\pi$.  The embeddedness of
$\widehat{\gamma}_i$ implies that 
 for any $\theta\in [0,T_i-\pi]$, 
$v_i(\theta+\pi)\neq -v_i(\theta)$ and for any $\theta\in 
[0,T_i-2\pi]$, $v_i(\theta+2\pi)\neq v_i(\theta)$.  Without loss of generality, 
we assume that $ v_i(0)=-1$. Consider the continuous 
function $g_i$ defined for $\theta\in [0,T_i-2\pi]$ by 
$g_i(\theta)=v_i(\theta+2\pi)-v_i(\theta)$.  Notice that 
$g_i(T_i-2\pi)<0$ 
if and only if $ v_i(T_i)=-1$.  Hence, 
as  $g_i(0)>0$, the intermediate value theorem implies  
that 
$v_i(T_i)=1$. Finally, consider the continuous function 
$f_i$ defined 
for $\theta\in[0, T_i-\pi]$ by $f_i(\theta)= v_i(\theta+\pi)+v_i(\theta)$. 
Clearly,  
$ f_i(0)<0$ and $ f_i(T_i-\pi)>0$. 
Hence the intermediate value theorem contradicts the fact 
that $f_i(\theta)\neq 0$; completing the proof.
\end{proof}

We now finish the proof of Theorem~\ref{BeautifulThm}. For completeness, we recall its statement.

\begin{theoremn}[\ref{BeautifulThm}]
 Let $\Omega$ be the interior of a compact oriented three-manifold  
$N=\overline{\Omega}$ with mean-convex
boundary. If $\Omega$ contains no closed minimal surfaces and 
$(\Omega, K, \mathcal{L}, \mathcal{S})$ is a minimal disk sequence, then the 
leaves of $\LL$ are either disks or annuli.  Furthermore, if $L$ is a leaf of 
$\mathcal L$ with the property that  $\overline{L}$ -- the closure in $\Omega$  of 
$L$ -- is a properly embedded minimal surface, then $\overline{L}$ is either a disk 
or it is an annulus which is disjoint from $K$.
\end{theoremn}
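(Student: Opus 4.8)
The plan is to assemble Theorem~\ref{BeautifulThm} from the propositions established above. First I would check that $\Omega$ satisfies Assumption~\ref{MainAssump}: properties (1) and (3) are exactly the standing hypotheses, while property (2) holds trivially when $N$ is compact by using the constant exhaustion $\Omega_t\equiv\Omega$ (it is precompact in $N=\overline\Omega$, its boundary $\partial N$ is mean-convex, and the region it must foliate, $\Omega\backslash\overline{\Omega}_0$, is empty). Since $(\Omega,K,\mathcal L,\mathcal S)$ is a minimal disk sequence, $\mathcal L$ is by definition a simple minimal lamination in $\Omega$, so Proposition~\ref{simpleliftprop} gives that every leaf $L$ of $\mathcal L$ has the simple lift property. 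Proposition~\ref{LamSeqNoTwoSidedProp} then shows $L$ is two-sided, and Proposition~\ref{NoPantsProp}(1) forces $L$ to be a disk or an annulus, which is the first assertion.

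For the ``furthermore'' clause, fix a leaf $L$ whose closure $\overline L$ in $\Omega$ is a properly embedded minimal surface; by the first part, $L$ is a disk or an annulus. The crucial step is to describe $\overline L\backslash L$. Since $\mathcal L$ is closed in $\Omega\backslash K$ and $L$ is a leaf, $\overline L\backslash K$ is a smooth embedded surface contained in $\mathcal L$; it is connected, because it is open in $\overline L$ and any of its components would be a non-empty open subset of $\overline L$ and hence meet the dense connected set $L$. A connected smooth surface inside a lamination is a single leaf, so $\overline L\backslash K=L$ and therefore $\overline L\backslash L=\overline L\cap K$. Because $\overline L$ is properly embedded, the leaf $L$ is regular (in the sense of the definition preceding Proposition~\ref{CM}) at each point of $\overline L\cap K$, so Proposition~\ref{CM}(2) shows that $\overline L$ meets the one-dimensional Lipschitz curve $K'\supset K$ transversely there. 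A properly embedded surface meeting a curve transversely does so in a relatively closed, discrete set; hence $D:=\overline L\backslash L=\overline L\cap K$ is discrete in $\overline L$ and $L=\overline L\backslash D$.

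It then remains to classify. Deleting the discrete set $D$ does not change the genus, so $\overline L$ is planar, and Assumption~\ref{MainAssump}(3) rules out $\overline L$ being a closed minimal surface; thus $\overline L$ is a connected, non-compact, genus-zero surface without boundary, and the number of ends of $L$ equals that of $\overline L$ plus $|D|$. As $L$ is a disk (one end) or an annulus (two ends), $|D|$ is finite and $\overline L$ has one or two ends: in the first case $\overline L$ is a disk; in the second $\overline L$ is an annulus, which forces $L$ to be an annulus and $D=\emptyset$, so $\overline L=L$ is disjoint from $K$. (The remaining combinatorial cases would make $\overline L$ a sphere and are excluded since $\overline L$ is not closed.) This is exactly the claimed dichotomy. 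I expect the only delicate point in this assembly to be identifying $\overline L\backslash L$ with a discrete subset of $K$; everything after that is bookkeeping with the classification of surfaces, and this step is precisely where the properness of $\overline L$ and the transversality in Proposition~\ref{CM} enter.
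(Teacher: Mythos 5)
Your proposal is correct and follows essentially the same route as the paper: verify Assumption~\ref{MainAssump} with the trivial exhaustion, invoke Propositions~\ref{simpleliftprop}, \ref{LamSeqNoTwoSidedProp} and \ref{NoPantsProp} for the first assertion, then use Proposition~\ref{CM} to show $\overline L\cap K$ is discrete in $\overline L$ and conclude by the classification of surfaces. Your write-up is somewhat more explicit than the paper's — in particular you spell out that $\overline L\setminus K=L$ and carry out the end-count bookkeeping, whereas the paper leaves both of these implicit in its final sentence — but this is a welcome elaboration rather than a different approach.
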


\begin{proof}
We first note that $\Omega$ satisfies the conditions of Assumption 
\ref{MainAssump} by taking the exhaustion to be $\Omega$ itself. Moreover, each 
leaf of $\LL$ has the simple lift property by Proposition \ref{simpleliftprop}. 
 Hence, Propositions~\ref{NoPantsProp} and \ref{LamSeqNoTwoSidedProp} together 
imply $L$ is either a disk or an annulus. The remainder of the theorem follows 
 from the deeper result of Colding and Minicozzi that we summarized in 
Proposition~\ref{CM}. Indeed, if $L$ is a leaf of $\LL$ with $\bar{L}$ a 
properly embedded minimal surface, then it is regular at each $p\in 
\overline{L}\cap K$. Hence, by 
Proposition~\ref{CM},  $\overline L\cap 
K$ is a discrete set of points in $\overline{L}$.  As $L$ is either a disk or 
an annulus, if $\overline{L}$ is an annulus it must be disjoint from $K$.
\end{proof}

\appendix

\section{Examples}
\label{ExampleApp}
\subsection{One-sided Limit Leaf}
In this section, we construct a simply-connected minimal surface $M$ embedded in 
a solid torus that is not properly embedded. Moreover, its closure is  a 
lamination in the solid torus consisting of three leaves. The leaf $M$ and two 
limit leaves. One limit leaf is an annulus while the other is a M\"obius 
band.

Let $T$ be a solid torus obtained by revolving a disk $\mathcal D$ in the 
$(x_1,x_3)$-plane around the $x_3$-axis. We take $\mathcal D$ small enough so 
that $T$ is  
mean-convex and there exists a stable 
minimal M\"obius band $M$ embedded in $T$ whose double cover is also stable 
and with boundary a simple closed curve in 
$\partial T$.  For the existence of such a 
surface we refer to \cite{mwe1}. Since the double cover of $M$ is  stable, a 
normal neighbourhood of $M$ can be foliated by minimal surfaces and, except for 
$M$ itself, the leaves of this foliation are two-sided annuli. Let $\Sigma$ 
denote the outermost leaf of this foliation and let $W$ denote the open region  
between $M$ and $\Sigma$. Let  $\Sigma_t$, $t\in [0.1]$, be an indexing of 
the leaves of the foliation with $\Sigma_0=M$ and $\Sigma_1=\Sigma$. Let $\wt T$ 
be the universal cover of $T$ with the induced metric. We realize $\wt 
T$ as
\[
\wt T =\{(x,y,z): x^2+y^2\leq 1\}\subset \Real^3
\] in a manner so that for any $\alpha\in\R$ 
the map
\[
G_\alpha\colon \wt T\to\wt T,\quad G(x,y,z)=(x,y,z+\alpha)
\]
is an isometry and for any $t\in \R$ the set 
\[
B_t\colon =\{(x,y,z)\in \wt T \mid z= t\}
\]
 is a minimal surface that is a lift of a disk obtained by intersecting $T$ with 
a vertical plane containing the $z$-axis.  The maps $G_{2\pi n}$, $n\in\mathbb 
Z$ are the deck transforms of $\wt T$. 
Let $\Pi\colon\wt T\to T$
denote the natural projection. Given an embedded surface $S\in \wt T$, if for 
any $n\in \mathbb Z\backslash \{0\}$ it holds that $G_{2\pi n}(S)\cap 
S=\emptyset$, then 
 $\Pi(S)$ is embedded 
in $T$. 

The M\"obius band $M$ lifts to a strip $\wt M$ with boundary consisting of two 
curves in $\partial \wt T$ and, being a lift, is   invariant by the deck 
transforms $G_{2\pi n}$, $n\in\mathbb Z$. The strip  $\wt M$ is two-sided 
and separates $\wt T$ into two components. Each leaf $\Sigma_t$ of the 
foliation, $t\in (0,1]$, lifts to two strips $\wt\Sigma^+_t$ and $\wt\Sigma^-_t$ 
 on  opposite sides of $\wt M$ and this gives a foliation of a two-sided normal 
neighbourhood of $\wt M$.  We shall denote by $\wt W$ the region foliated by the 
leaves $\wt\Sigma^+_t$ and denote such leaves by $\wt\Sigma_t$. Given a point 
$p\in \wt W$, then $p\in \wt \Sigma_t$ for a certain $t\in (0,1)$ and we denote 
that $t$ by $t(p)$. Note that $\partial \wt W \cap \partial \wt T$ consists of 
two disconnected components, $\Delta_1$ and $\Delta_2$, and let 
$\alpha_i\subset \Delta_i$, $i=1,2$ be  analytic curves such that the following 
holds:
\begin{itemize}
\item  $\alpha_i$ intersects $\partial \wt \Sigma_t$, $t\in (0,1)$, in exactly one point;
\item  $\alpha_i$ converges to $\partial \wt \Sigma_1\cap\Delta_i$ as $z$ goes to infinity and  to $\partial \wt M\cap\Delta_i$ as $z$ goes to minus infinity.
\end{itemize}

Let $\wt W_n$ be the region in $\wt W$ in between the minimal disks $B_{\pm 
2\pi n}$. Then, $\wt W=\bigcup_n\wt W_n$  and $\partial\wt W_n$ 
consists of six surfaces: four minimal surfaces, $\wt \Sigma_0^n=\wt M\cap \wt 
W_n$, $\wt \Sigma_1^n=\wt \Sigma_1 \cap \wt W_n$, $B_+^n=B_{2\pi n}\cap \wt 
W_n$, and $B_-^n=B_{-2\pi n}\cap \wt W_n$ and two mean convex surfaces 
$\Delta^n_1=\Delta_1\cap\wt W_n$ and $\Delta^n_2=\Delta_2\cap\wt W_n$.  Since 
the contact angle between such surfaces is less than $\pi$, the boundary of $\wt 
W_n$ is mean-convex and a good barrier to solve Plateau problem. 

Let $\gamma_n\in\partial W_n$ be a piece-wise smooth simple closed curve 
constructed in the following way. The curve $\gamma_n$ is given by the union 
$\alpha^n_1\cup\beta^n_+\cup\alpha^n_2\cup \beta^n_-$ where 
$\alpha_i^n=\alpha_i\cap\wt W_n$. The curve $\beta^n_+$ is an arc connecting the 
endpoints, $p^n_i$ of $\alpha^n_i$ in $B_+^n$.  If $t_n=t(p^n_1)=t(p^n_2)$, 
then we take $\beta^n_+$ to lie in $\Sigma_{t_n}.$  Otherwise, we take 
$\beta^n_+$ to intersect each $\Sigma_t$ in at most one point. We choose the 
the curve $\beta^n_-$ in an analogous manner in $B_-^n$.
Clearly, by our choices of $\alpha_i$,  for $n$ sufficiently large, 
\[
\max_{p\in \beta^n_+}\{t(p)\}<\min_{p\in \beta^n_-}\{t(p)\}.
\]
This implies that for any $n$ sufficiently large and any 
$m\in\mathbb Z\backslash \{0\}$ then 
\[
 G_{2\pi m}(\gamma_n)\cap \gamma_n=\emptyset
\]

By a result in Meeks and Yau \cite{my2}, $\gamma_n$ is the boundary of an 
embedded, area minimizing disk $D_n\subset \wt W_n$. Since it is area minimizing 
and, for $n$ large, $\partial G_{2\pi m}(D_n)\cap\gamma_n=\emptyset$ for any 
$m\in\mathbb Z\backslash \{0\}$, it follows that 
\[
 G_{2\pi m}(D_n)\cap D_n=\emptyset,
\]
giving that $\Pi ({D_n})$ is also embedded. Moreover, since $D_n$ is area 
minimizing and $\alpha_i^n$, $i=1,2$, are analytic curves, it satisfies 
curvature estimates up to $\alpha_i$, $i=1,2$ and a standard compactness 
argument gives that it converges to a complete simply-connected minimal surface 
$D_\infty$ embedded in $\wt T$ with boundary $\alpha_i$, $i=1,2$. By construction, 
\[
 G_{2\pi m}(D )\cap D =\emptyset
\]
for any 
$m\in\mathbb Z\backslash \{0\}$,
therefore, if we let   
$D=\Pi(D_\infty)$, then $D$ is a complete embedded disk. Clearly it is  not 
properly embedded $T$. By curvature estimates for stable minimal 
surfaces, $\overline D$, the 
closure of $D$ in $T$, is a minimal lamination. We claim that $\overline D$ consists of three leaves, $D$ itself and two limit 
leaves $D_1$ and $D_2$. By construction, $\overline D$ contains a compact leaf 
$D_1$ with boundary $\partial \Sigma$ and a compact leaf $D_2$ with boundary 
$\partial M$. Using the foliation $\Sigma_t$ and the strong maximum principle 
one concludes that $D_1=\Sigma$ and $D_2=M$.

\subsection{Torus limit leaf}
In this section we construct a three-manifold, $\Omega$, and a complete, 
embedded disk $\Delta\subset \Omega$ whose closure, $\overline{\Delta}$, 
is a minimal lamination in $\Omega$ one of whose leaves is a minimal 
torus.   More specifically, we take $\Omega=\mathbb{T}^2\times \Real$ together 
with a certain metric for which Assumption \ref{MainAssump} does not hold.
In $\Omega$ we  construct an embedded minimal disk $\Delta$ that is 
not properly embedded and so the closure of $\Delta$ is  a 
proper minimal lamination in $\Omega$ consisting of five leaves. The leaf 
$L_1=\Delta$ and four limit leaves. Two of the limit leaves are the tori 
$L_2=\mathbb{T}^2\times \set{-1}$ and $L_3=\mathbb{T}^2\times \set{1}$, the 
other two $L_4$ and $L_5$ are non-proper annuli with 
$\overline{L}_4=\overline{L}_5=L_2\cup L_3$.  The original idea for this construction is 
due to D. Hoffman; we refer also to \cite{CalleLee} for a related construction. 

We begin by constructing a metric $g$ on the cylinders
\begin{equation*}
 C=\mathbb{S}^1_\theta \times \Real_t.
\end{equation*}
Consider the metric
\begin{equation*}
 g_0= (2+\cos \pi t) d\theta^2 + dt^2
\end{equation*}
and the foliation of $C$ by circles, $\mathbb{S}^1[t]=\mathbb{S}^1\times \set{t}\subset C$. 
It is an elementary computation to see that these circles all have constant 
curvature. Moreover, the leaves which are geodesics are $\mathbb{S}^1[i]$ for $i\in 
\mathbb Z$.  When $i\in 2\mathbb Z$ these geodesics are unstable 
while for $i\in 2\mathbb Z+1$ they are stable.  
Let $U$ be the connected 
component of $C\backslash \left( \mathbb{S}^1[-1]\cup 
\mathbb{S}^1[1]\right)$ which contains $(0,0)$. Similarly, we consider the 
foliation $\alpha_\theta=(\set{\theta}\times \Real)\cap U$ of $U$.  It is clear 
that all the leaves of this foliation are geodesics.
Finally, let us denote by $T_v$ the ``translation'' map $T_v((\theta,t))=(\theta+v,t)$ 
which is clearly an isometry and by $R$ the isometric 
involution given by $R((\theta,t))=(-\theta, -t)$.

Standard methods -- e.g., a shooting method or a minimization procedure in the 
universal cover of $C$ -- produce an embedded geodesic
$\gamma_+:[0,\infty)_s\to \gamma_+\subset U$ with $\gamma_+(0)=(0,0)$ and so 
that the $t$ coordinate of $\gamma_+(s)$ is monotonically increasing in $s$.  Here $s$ is the 
arclength parameter.  It is clear that $\gamma_+$ must accumulate at 
$\mathbb{S}^1[1]$.  Let $\gamma=\gamma_+\cup R(\gamma_+)$. This is a non-proper 
geodesic in $C$ which accumulates at $\mathbb{S}^1[-1]\cup \mathbb{S}^1[1]$.
It follows also from the construction that if $\gamma_v:=T_v(\gamma)$ then 
$\set{\gamma_v}_{v\in \Real}$ is a foliation of $U$.  With that in mind, let 
$\gamma_-=\gamma_{-\pi/2}$ and $\gamma_+=\gamma_{\pi/2}$ and let $V$ be the 
component of $U\backslash \left(\gamma_-\cup \gamma_+ \right)$ which contains $(0,0)$.

We now modify the metric $g_0$, so that geodesics which pass 
through $(0,0)$ are unstable.
To that end, pick a compactly supported function $\phi\in C^\infty_0(V)$ so 
that
\begin{itemize}
 \item $0\leq \phi\leq 1$;
 \item $\spt (\phi)\subset B_{2\epsilon} \subset V\cap 
(-\frac{\pi}{2},\frac{\pi}{2})\times (-1,1)$;
 \item $\phi\circ R=\phi$;
 \item $\phi=1$ on $B_{\epsilon}$.
\end{itemize}
Here $B_{r}$ is the geodesic ball (with respect to 
$g_0$) about $(0,0)$ of radius $r$ and we choose $\phi$ so that $2 \epsilon$ is 
smaller 
than the injectivity radius of $g_0$ at $(0,0)$.
Now fix a point $p\in \mathbb{S}^2$ and let $g_{\mathbb{S}^2}$ be the round 
metric of curvature one on $\mathbb{S}^2$. We denote by 
$\mathcal{B}_{r}$  the geodesic ball of radius 
$r$ in $\mathbb{S}^2$ about $p$.  As $B_{2\epsilon}$ and 
$\mathcal{B}_{\frac{7}{8}\pi}$ are disks, there is a smooth 
diffeomorphism 
$$\Psi:B_{2\epsilon}\to 
\mathcal{B}_{\frac{7}{8}\pi}.$$
Moreover, we may choose this 
smooth diffeomorphism so 
that
\begin{itemize}
 \item $\Psi((0,0))=p$;
 \item $\Psi(B_{\epsilon})=\mathcal{B}_{\frac{3}{4}\pi}$;
 \item $\Psi\circ R = \widetilde{R}\circ \Psi$ -- here $\widetilde{R}$ is the isometry 
of $\mathbb{S}^2$ given by rotating $180^\circ$ around the line 
through $p$ and $-p$.
\end{itemize}
We now set 
\[
 g_1= (1-\phi) g_0+ \phi \Psi^*g_{\mathbb{S}^2}
\]
Geodesics of $g_1$ that pass through $(0,0)$ are, by construction, unstable.
\begin{lem}\label{LargesLem}
If $\gamma$ is a geodesic in $V$ for $g_{1}$ such that
$(0,0)\in 
\gamma$ and $B_{\epsilon}\cap \gamma$ is proper in $B_{\epsilon}$, then $\gamma$ 
is unstable.
\end{lem}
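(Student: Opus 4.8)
The plan is to exploit the fact that on the ball $B_\epsilon$ the metric $g_1$ agrees with $\Psi^*g_{\mathbb S^2}$, so that $(B_\epsilon,g_1)$ is isometric, via $\Psi$, to the spherical cap $\mathcal B_{3\pi/4}$ of radius $\tfrac{3}{4}\pi$ in the unit round sphere. Since $\tfrac{3}{4}\pi>\tfrac{\pi}{2}$, a geodesic through the centre of such a cap is long enough to be unstable, and that is the entire content of the lemma. I will produce an explicit destabilising variation supported in $B_\epsilon$.

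First I would record the geometry near $(0,0)$. Because $\phi\equiv1$ on $B_\epsilon$, it extends by continuity to $\phi\equiv1$ on $\overline{B_\epsilon}$, hence $g_1=\Psi^*g_{\mathbb S^2}$ on $\overline{B_\epsilon}$; in particular the Gauss curvature $K$ of $g_1$ equals $1$ identically on $\overline{B_\epsilon}$, and $\Psi$ is an isometry of $(\overline{B_\epsilon},g_1)$ onto $(\overline{\mathcal B_{3\pi/4}},g_{\mathbb S^2})$ carrying $(0,0)$ to $p$. Note also that $\overline{B_\epsilon}\subset B_{2\epsilon}\subset V$.

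Next I would use the properness hypothesis to identify the relevant geodesic arc. Since $\gamma\cap B_\epsilon$ is proper in $B_\epsilon$ it is a properly embedded $1$-manifold there, so in particular it has no endpoint in the interior of $B_\epsilon$. Let $\gamma_0$ be the connected component of $\gamma\cap B_\epsilon$ containing $(0,0)$; then $\overline{\gamma_0}$ is a geodesic arc of $(B_\epsilon,g_1)$ with both endpoints on $\partial B_\epsilon$. Transporting by $\Psi$, $\Psi(\overline{\gamma_0})$ is an arc of a great circle of $\mathbb S^2$ through $p$, and the portion of such a great circle lying within distance $\tfrac{3}{4}\pi$ of $p$ is an arc of length exactly $\tfrac{3}{2}\pi$ with $p$ at its midpoint. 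Hence $L:=\mathrm{length}(\gamma_0)=\tfrac{3}{2}\pi$, with $(0,0)$ the midpoint of $\gamma_0$, and $\overline{\gamma_0}\subset\overline{B_\epsilon}\subset V$. This is the one step that genuinely uses the hypothesis: without properness the component through $(0,0)$ could be a short sub-arc, and a geodesic of length $\le\pi$ in curvature $1$ is stable, so the hypothesis cannot be dropped.

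Finally I would write down the destabilising variation. Parametrise $\overline{\gamma_0}$ by arclength $s\in[-L/2,L/2]$ with $s=0$ at $(0,0)$, set $f(s)=\cos(\pi s/L)$ for $|s|\le L/2$, and extend $f$ by $0$ on the rest of $\gamma$; this is a Lipschitz function, vanishing at $s=\pm L/2$, with support the compact set $\overline{\gamma_0}\subset V$. Since $K\equiv1$ along $\overline{\gamma_0}$, the index form (second variation of length) of $\gamma$ in the normal direction determined by $f$ is
\[
Q(f,f)=\int_{\gamma}\big(|f'|^2-Kf^2\big)\,ds=\frac{\pi^2-L^2}{2L}<0,
\]
because $L=\tfrac{3}{2}\pi>\pi$. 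Thus the stability operator $-\tfrac{d^2}{ds^2}-K$ along $\gamma$ admits a compactly supported function on which its quadratic form is negative, i.e.\ $\gamma$ is unstable. (Equivalently, in curvature $1$ consecutive conjugate points along a geodesic are spaced $\pi$ apart, so the arc $\overline{\gamma_0}$ of length $\tfrac{3}{2}\pi$ contains a conjugate pair in its interior, forcing Morse index at least one.) I do not expect any real obstacle here; the only point requiring care is the use of properness in the previous step.
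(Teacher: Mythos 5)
Your proof is correct and follows the same route as the paper: use the isometry $\Psi\colon(B_\epsilon,g_1)\to(\mathcal B_{3\pi/4},g_{\mathbb S^2})$ to identify the component of $\gamma\cap B_\epsilon$ through the origin with a great-circle arc of length $\tfrac32\pi>\pi$, and conclude instability. The only difference is that you make the final step explicit by exhibiting the destabilising test function $f(s)=\cos(\pi s/L)$, where the paper simply invokes the standard fact that a geodesic of length greater than $\pi$ in curvature $1$ is unstable.
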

\begin{proof}
If $\gamma$ is such a geodesic, then $\Psi(\gamma\cap B_\epsilon)$ is a  proper geodesic in $\mathcal{B}_{\frac{3}{4} 
\pi}$ that contains $p$. Hence $\Psi(\gamma\cap B_\epsilon)$,  has 
length at least $\frac{3}{2}\pi>\pi $ and so is unstable.
\end{proof}
Note that the curves $\mathbb{S}^1_{\pm 1}$, $\gamma_\pm$ and $\alpha_{\pi}$ all 
remain 
geodesics for $g_1$ as $g_1=g_0$ in a  neighborhood of these 
curves.  Furthermore, as $\phi\circ R=\phi$ and $\Psi\circ R=\tilde {R} 
\circ \Psi$,  $R$ is also an isometry of $g_1$.

Our goal now is to construct the desired disk $\Delta.$  To that end, let 
$\Omega=\mathbb{S}^1_\psi\times C=\mathbb{T}^2\times \Real$ have the product 
metric
\begin{equation*}
 g_\Omega= d\psi^2+g_{1}.
\end{equation*}
Set $U'=\mathbb{S}^1\times U$ and $V'=\mathbb{S}^1\times V$ and note that 
$\Gamma_\pm=\mathbb{S}^1\times \gamma_\pm$ are totally geodesic cylinders 
which 
accumulate at the totally geodesic tori $\mathbb{T}^2[\pm 
1]=\mathbb{S}^1\times 
\mathbb{S}^1[\pm 1]$.  Likewise,  let $A=\mathbb{S}^1\times \alpha_\pi \subset 
U'$, which is a totally geodesic annulus.
The following ``translation'' map is an isometry of 
$g_\Omega$ for $v\in \Real$
$$T_v((\psi,  \theta, t))=(\psi+v,  
\theta, t)$$
and following ``reflection'' map
\begin{equation*}
 R'((\psi,  \theta, t))=(-\psi,R(\theta, t))=(-\psi, -\theta, -t).
\end{equation*}
is an isometric involution.

We denote the universal cover of $\Omega$ by
$\widehat{\Omega}$.  That is,
\begin{equation*}
 \widehat \Omega=\Real\times \Real \times \Real
\end{equation*}
with coordinates $(\widehat {\psi},\widehat \theta, \widehat t)$.  
Let $\widehat \Pi: \widehat \Omega \to 
\Omega$ be the natural covering map.  For subsets $S\subset \Omega$ we will 
denote lifts of these sets to $\widehat \Omega$ by $\widehat{S}$.
In particular, the tori $\mathbb{T}^2[{\pm 1}]$ lift to stable minimal
disks  $\widehat{\mathbb{T}}^2[{\pm 1}]$ and the cylinders $\Gamma_\pm$ lift to 
minimal 
disks $\widehat{\Gamma}_\pm$ which together bound a region 
$\widehat{V}:=\widehat{V}'$ which contains $(0,0,0)$.  Likewise, we let 
$\widehat{A}_i=\Real\times \set{\pi+2\pi i}\times (-1,1)$ for $i\in \mathbb{Z}$ 
be lifts of $A$.
We denote by $\widehat{T}_v$ the isometry of $\widehat{\Omega}$ given 
by $\widehat{T}_v^1(\widehat \psi, \widehat \theta, \widehat t)=(\widehat 
\psi+v, \widehat \theta, \widehat t)$ for $v\in \Real$ and let $\widehat{R}$ be 
the reflection $\widehat{R}(\widehat \psi, \widehat \theta, \widehat 
t)=(-\widehat \psi, -\widehat \theta, -\widehat t)$. Note that 
$\widehat{T}_{2\pi i}$ for $i\in \mathbb Z$ are deck transforms of the cover.  
Furthermore, $\widehat{R}(\widehat{\Gamma}^+)=\widehat \Gamma^-$ and 
$\widehat{R}(\widehat{A}_i)=\widehat{A}_{-i-1}$. Finally, let us denote by 
$\widehat G_i$, $i\in \mathbb Z$, the deck transforms 
$$
\widehat G_i((\psi,  \theta, t))=(\psi,  
\theta+2\pi i, t)
$$
and note that $\widehat G_i(\widehat V)\cap \widehat V=\emptyset$ for 
$i\in\mathbb Z\backslash \{0\}$.

We now construct an 
embedded minimal disk $\widehat \Delta$ in $\widehat{\Omega}$ so that $\Delta = 
\widehat{\Pi} (\widehat{\Delta})$. To that end, let $\widehat{\sigma}_j^+$  be
the curves $\widehat{\Gamma}_+\cap \set{ \widehat{\psi}=j}$ and  
$\widehat{\sigma}^-_j$ 
the curves $\widehat{\Gamma}_{-}\cap \set{ \widehat{\psi}=-j}$. We denote by 
$\widehat{\sigma}_{j,i}^+$ the segment of $\widehat{\sigma}_j$ between $\widehat{A}_i$ and 
$\widehat{A}_{-i-1}$ and likewise for $\widehat{\sigma}_{j,i}^-$.  Let 
$\widehat{\tau}_{j,i}^-$ be 
a real-analytic curve connecting the endpoint  of $\widehat{\sigma}_{j,i}^+$ in 
$\widehat A_{-i-1}$ to the endpoint of $\widehat{\sigma}_{j,i}^-$ which is chosen to be 
contained in $\widehat A_{-i-1}$ and to have the property that both coordinates $\widehat t$ and 
$\widehat{\psi}$ are strictly monotonic. Set  $\widehat \tau_{j,i}^+=\widehat 
R(\widehat \tau_{j,i}^-)$.  
One verifies that $\widehat{\tau}_{j,i}^+$ connects the other endpoints of 
$\widehat{\sigma}_{j,i}^+$ and $\widehat{\sigma}_{j,i}^-$. Hence,  
\[
\widehat{\delta}_{j,i}:=\widehat{\sigma}_{j,i}^+\cup \widehat{\sigma}_{j,i}^- 
\cup 
\tau_{j,i}^+\cup \tau_{j,i}^-
\]
is a closed curve and 
$\widehat{\delta}_{j,i}=\widehat{R}(\widehat{\delta}_{j,i})$.  We note that our choice of 
curves implies further that $\widehat{T}_v(\widehat{\delta}_{j,i})\cap 
\widehat{\delta}_{j,i}=\emptyset$ for $v\neq 0$.  Now let $\widehat{\Delta}_{j,i}$ be 
minimal disks which solve the Plateau problem with boundary $\widehat \delta_{j,i}$. 
 By the strong maximum principle $\widehat{T}^1_v(\widehat\Delta_{j,i})\cap 
\widehat\Delta_{j,i}=\emptyset$ for $v\neq 0$. 
In particular, $\set{\widehat{T}^1_v(\widehat\Delta_{j,i})}_{v\in \Real}$ is a minimal 
foliation $\mathcal{D}_{j,i}$ of an open subset, $\widehat{V}_{j,i},$ of 
$\widehat{V}$ and $\widehat{T}_v$ leaves $\mathcal{D}_{j,i}$ 
invariant.  This together with the strong maximum principle applied to the 
Jacobi function generated by $\widehat{T}_v$ implies that the leaves of 
$\mathcal{D}_{j,i}$ are graphs over 
\[
V_i:=\{(\widehat \psi,\widehat \theta,\widehat t )\in \widehat V :\widehat \psi=0, \theta \in (-\pi-2\pi i, \pi+2\pi i)\}.
\]
As $\widehat{R}$, leaves both $\widehat \delta_{j,i}$  and 
$\widehat{V}_{j,i}$ unchanged, it follows from the strong maximum principle 
that $
\widehat\Delta_{j,i}=\widehat{R} (\widehat\Delta_{j,i})$.  In particular 
$(0,0,0)\in \widehat \Delta_{j,i}$.  By Theorem~\ref{SmoothCompactThm}, up to 
passing to a subsequence, the minimal foliations  
$\mathcal{D}_{j,i}$ converge smoothly on compact subsets of $\widehat{V}$ to a 
minimal foliation of $\widehat{V}$.  This foliation is 
also invariant under $\widehat{T}_v$.  A consequence of this is that if $L$ is a 
leaf of $\mathcal{D}$, then either $L$ splits as the product $\Real \times \eta$ 
where  $\eta$ is a geodesic in $V_{\infty}$ or $L$ is graph over some open 
subset of $V_\infty$. If the former occurs, then the stability of $L$ implies 
that $\eta$ is also a stable geodesic. 

Let $\widehat{\Delta}$ be the leaf of $\mathcal{D}$ which contains $(0,0,0)$. 
As $\widehat \Delta$ is the limit of $\widehat\Delta_{j,i}$, $\widehat 
\Delta$ is complete. By Lemma \ref{LargesLem}, the geodesic in $V_\infty$ 
through $(0,0,0)$ is unstable  and hence $\widehat \Delta$ cannot split.  In 
particular, $\widehat \Delta$ is a graph over some open subset of $V_\infty$ 
and $\widehat T_v (\widehat \Delta)\cap \widehat \Delta =\emptyset$ 
Set $\Delta=\widehat \Pi (\widehat \Delta)$ and note that, as $\widehat \Delta 
\subset \widehat V$, $\widehat G_i(\widehat \Delta)\cap \widehat 
\Delta=\emptyset$ for $i\in \mathbb Z\backslash \{0\}$. Hence, $\Delta$ is 
a complete embedded 
minimal disk in $V\subset \Omega$. Clearly $\Delta$ cannot be properly embedded 
in $\Omega$.  Nevertheless, the curvature estimates for stable minimal 
surfaces imply that $\overline{\Delta}$ is a smooth 
minimal lamination in $\Omega$. To determine the other leaves we note first 
that $\lim_{v\to \infty} 
\widehat{T}_v(\Delta)$ converges in ${\widehat \Omega}$ to some $\widehat{T}_v$ 
invariant minimal surface $\widehat{L}_+$ -- possibly, but not necessarily, 
$\widehat{\Gamma}_+$.  Similarly,  $\lim_{v\to - \infty} 
\widehat{T}_v(\widehat\Delta)$ converges 
to a $\widehat T_v$ invariant surface $\widehat L_-=\widehat{R}(\widehat L_+)$.  
As a consequence, $L_\pm =\widehat \Pi (\widehat L_\pm)$ are non-proper embedded 
minimal annuli that are leaves of $\overline \Delta$.  Finally, by construction 
$L_\pm$ are contained in $\overline{V}'$, that is are trapped between $\Gamma_+$ 
and $\Gamma_-$. Since the ends of $\Gamma_+$ and $\Gamma_-$ converge to the same 
side of $ \mathbb{T}^2_{1}$ and  $\mathbb{T}^2_{-1}$, one verifies that 
$\overline{L}_\pm = \mathbb{T}^2[{\pm 1}]$ and these are the remaining leaves 
of $\overline \Delta$. 
That is,
$\Delta=\widehat{\Pi}(\widehat{\Delta})$ is the desired minimal disk.

\section{Curvature Estimate of Schoen and Simon} \label{SScurvEstSec}
For the convenience of the reader we state the curvature estimate for embedded minimal disks  with a uniform area bound proved by  Schoen and Simon in \cite{Schoen1983a}.
\begin{thm}\label{SScurvEstThm}
 Fix $(\Omega,g)$ a Riemannian three-manifold and let 
$\mathcal{B}_{2r}(p)\subset N$ satisfy:
 \begin{itemize}
  \item $\exp^\Omega_p:B_{2r}(0)\to \mathcal{B}_{2r}(p)$ is a smooth 
diffeomorphism;
  \item With $g_{ij}dx^i dx^j=(\exp^\Omega_p)^* g$,  there is an $0<\alpha\leq 
1$ so
   \[
    \frac{1}{2} \delta_{ij}<g_{ij} <2 \delta_{ij}, \; \; \sup_{B_{2r}}\left(r \left| \frac{\partial g_{jk}}{\partial x^i} \right|+r^2 \left| \frac{\partial^2 g_{kl}}{\partial x^i \partial x^j} \right|\right)< 1,
    \]
    and 
    \[
     \sup_{(x,y)\in B_{2r}\times B_{2r}} r^{2+\alpha} |x-y|^{-\alpha} \left| \frac{\partial^2 g_{kl}}{\partial x^i \partial x^j}(x)-\frac{\partial^2 g_{kl}}{\partial x^i \partial x^j}(y)\right|< 1.
    \]
 \end{itemize}
    Given $\mu>0$, there is a $C=C(\mu)>0$ so that if $\Sigma\subset \mathcal{B}_{2r}(p)$ is a properly embedded minimal disk in $\mathcal{B}_{2r}(p)$ and $|\Sigma|\leq \mu r^2$, then 
  \[
   \sup_{\mathcal{B}_r(p)\cap \Sigma} |A|^2 < C r^{-2}.
  \]
\end{thm}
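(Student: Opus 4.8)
This is the curvature estimate of Schoen and Simon~\cite{Schoen1983a}, so the plan is to follow their proof; I describe its structure and where the hypotheses enter. Replacing $\Sigma$ by its image under $x\mapsto r^{-1}x$, one may take $r=1$, so that in the coordinates supplied by $\exp^\Omega_p$ the ambient metric is $C^2$-close to the Euclidean metric on $B_2$, the closeness being quantified by the stated bounds. I would argue by contradiction. If the estimate fails for some fixed $\mu>0$, there is a sequence of properly embedded minimal disks $\Sigma_j\subset\mathcal{B}_2(p)$ with $|\Sigma_j|\le\mu$ yet $\sup_{\mathcal{B}_1(p)\cap\Sigma_j}|A_{\Sigma_j}|\to\infty$. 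A standard point-selection argument (as in~\cite{cm24}) produces $p_j\in\Sigma_j$ with $\lambda_j:=|A_{\Sigma_j}|(p_j)\to\infty$ such that, after translating $p_j$ to the origin and dilating coordinates by $\lambda_j$, the rescaled disks $\widetilde\Sigma_j$ satisfy $|A_{\widetilde\Sigma_j}|\le 2$ on $B_{R_j}(0)$ with $R_j\to\infty$, $|A_{\widetilde\Sigma_j}|(0)=1$, and the rescaled ambient metrics converge in $C^2_{loc}$ to the flat metric on $\R^3$. The monotonicity formula for minimal surfaces in a nearly Euclidean metric (whose error terms vanish as $\lambda_j\to\infty$) applied to the $\Sigma_j$ yields $|\Sigma_j\cap\mathcal{B}_\rho(x)|\le C\mu\rho^2$ for $x\in\Sigma_j\cap\mathcal{B}_1(p)$ and $\rho\le 1/2$, which rescales to the scale-invariant bound $|\widetilde\Sigma_j\cap B_R(0)|\le C\mu R^2$ for all $R\le\lambda_j/2$.

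Next I would pass to a blow-up limit. Uniform bounds on $|A_{\widetilde\Sigma_j}|$ make each $\widetilde\Sigma_j$, on a fixed ball, a union of graphs of bounded gradient over affine planes, and the area bound caps the number of such sheets; so, after passing to a subsequence, the $\widetilde\Sigma_j$ converge in $C^\infty_{loc}(\R^3)$, possibly with integer multiplicity, to a minimal surface, of which I take $L$ to be the leaf through $0$. Then $L$ is a complete, genus-zero, embedded minimal surface in $\R^3$ that is not a plane (because $|A_L|(0)=1$) and has at most quadratic area growth, $|L\cap B_R(0)|\le C\mu R^2$.

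It remains to contradict the existence of such an $L$. If the convergence $\widetilde\Sigma_j\to L$ has multiplicity at least two somewhere, embeddedness orders the nearby sheets, and the limit of the gaps between consecutive sheets is a positive Jacobi field on $L$; hence $L$ is two-sided and stable, and a complete two-sided stable minimal surface in $\R^3$ is a plane (do Carmo--Peng, Fischer-Colbrie--Schoen, Pogorelov) --- a contradiction. If the convergence is everywhere of multiplicity one, then $L$ is simply connected: a non-contractible loop in $L\cap B_R$ would lift to a loop of bounded diameter in the disk $\widetilde\Sigma_j$, bounding a compact subdisk of $\widetilde\Sigma_j$ which, being minimal, lies in the convex hull of its boundary and hence in a fixed ball; passing to the limit makes the loop contractible in $L$. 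Thus $L$ is a complete, simply connected, embedded minimal surface of quadratic area growth --- the quadratic, rather than cubic, area growth excludes the helicoid --- and the remaining task is to show that this forces $L$ to be a plane, contradicting $|A_L|(0)=1$. Carrying out this last step without appeal to later classification theorems is the crux of the argument, and it is here that Schoen and Simon exploit the special geometry of minimal surfaces: the Gauss map of $L$ is anticonformal, so $\int_L|A_L|^2$ equals twice the area of the spherical image of $L$ counted with multiplicity, and a covering argument --- controlling how the spherical images of the pieces of $L$ may overlap, fed by the area bound and the disk structure --- rules out the concentration of total curvature that the failure of the estimate would entail. I expect that reconstructing this covering/iteration step, rather than any of the soft compactness steps, is the main obstacle.
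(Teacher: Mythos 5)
The paper does not prove this theorem at all: it is stated in Appendix~\ref{SScurvEstSec} purely as a citation of Schoen and Simon~\cite{Schoen1983a}, ``for the convenience of the reader,'' so there is no in-paper argument to compare against.

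As for the proposal itself: your blow-up outline is a reasonable modern way to \emph{organize} such an estimate, and most of the soft steps (rescaling, point selection, area monotonicity giving a scale-invariant area bound, the multiplicity-$\geq 2$ case producing a positive Jacobi field and hence a stable plane) are correct as stated. But the proof as written does not close. The multiplicity-one case leaves you with a complete, simply connected, embedded minimal surface $L\subset\R^3$ with quadratic area growth and $|A_L|(0)=1$, and the assertion that such an $L$ cannot exist is not something you can simply defer. Circa 1983 this is essentially \emph{equivalent} in difficulty to the theorem being proved; indeed, quadratic area growth for a complete minimal surface does not, by itself and by elementary means, yield finite total curvature (and hence planarity via Osserman). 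The modern route to that conclusion would pass through exactly the kind of classification machinery (Colding--Minicozzi, Meeks--Rosenberg, Collin, etc.) that you correctly say you want to avoid. So the blow-up ``reduction'' moves the difficulty rather than resolving it, and the paragraph that begins ``the remaining task is to show that this forces $L$ to be a plane'' is a genuine gap, not merely a deferred routine step. The actual Schoen--Simon argument is a direct, quantitative, non-compactness proof: a tilt-excess decay iteration controlling the Gauss map, together with a covering lemma that uses the disk topology and the area bound to bound the number of sheets and the multiplicity of the spherical image. If you want to present a proof of this theorem, you would need either to reproduce that quantitative iteration or to supply a complete, self-contained argument for the nonexistence of the limiting surface $L$ above.

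Two smaller technical cautions in your sketch. First, the Jacobi-field/stability argument in the multiplicity-$\geq 2$ case needs the sheets to be ordered coherently along the whole leaf; this uses that $L$ is two-sided, which holds here since $\R^3$ is simply connected and oriented, but you should say so. Second, your simple-connectivity argument in the multiplicity-one case needs the convergence to be multiplicity one \emph{on every compact set}, not just somewhere: a loop in $L\cap B_R$ has a nearby loop in $\widetilde\Sigma_j$ only when the local sheet count is one along the entire loop, so the dichotomy ``multiplicity $\geq 2$ somewhere'' versus ``multiplicity one everywhere'' needs to be stated and justified at the level of the whole connected leaf.
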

% This result is well known follows by a slight modification of White's arguments in~\cite{wh5}, the monotonicity formula for area , and the fact that the only properly embedded minimal disk in $\Real^3$ with euclidean area growth is the plane. 

%\input{OtherStuff.tex}

\providecommand{\bysame}{\leavevmode\hbox to3em{\hrulefill}\thinspace}
\providecommand{\MR}{\relax\ifhmode\unskip\space\fi MR }
% \MRhref is called by the amsart/book/proc definition of \MR.
\providecommand{\MRhref}[2]{%
  \href{http://www.ams.org/mathscinet-getitem?mr=#1}{#2}
}
\providecommand{\href}[2]{#2}

\end{document}